\documentclass[12pt]{article}



\usepackage{amsthm,amsmath,amssymb}

\usepackage{graphicx}

\usepackage[colorlinks=true,citecolor=black,linkcolor=black,urlcolor=blue]{hyperref}

\usepackage{enumitem}
\setlist{nolistsep}

\newcommand{\arxiv}[1]{\href{http://arxiv.org/abs/#1}{\texttt{arXiv:#1}}}
\newcommand{\conv}{\textrm{Conv}}
\newcommand{\euler}[2]{\genfrac{\langle}{\rangle}{0pt}{}{#1}{#2}}

\newcommand{\Ver}{\textrm{Vert}}
\newcommand{\link}{\mathrm{link}}


\theoremstyle{plain}
\newtheorem{theorem}{Theorem}
\newtheorem{lemma}[theorem]{Lemma}
\newtheorem{corollary}[theorem]{Corollary}

\theoremstyle{definition}
\newtheorem{definition}[theorem]{Definition}

\theoremstyle{remark}



\title{\bf On interior polytope number sequences}


\author{Michael A. Jackson\\
\small Department of Mathematics\\[-0.8ex]
\small Grove City College\\[-0.8ex]
\small Grove City, PA, USA\\
\small\tt majackson@gcc.edu
}



\begin{document}

\maketitle


\begin{abstract}
  Polytope numbers for a given polytope are an integer sequence defined by the combinatorics of the polytope. Recent work by H. K. Kim and J. Y. Lee has focused on writing polytope number sequences as sums of simplex number sequences. In addition, these works have given a process for writing the polytope number sequence in a recursive fashion by using the interior sequence for the various $k$-faces of the polytope, each viewed as a $k$-dimensional polytope. This paper shows that the coefficients of the linear combination of simplex number are the $h$-vector components for a certain type of triangulation of the polytope. In addition, reversing the order of the coefficients in the linear combination is shown to equal the interior polytope sequence for this polytope.

  \bigskip\noindent \textbf{Keywords:} polytope numbers, partitionable simplicial complex
\end{abstract}

\section{Introduction}

Polygonal numbers are sequences of integers based on the set of points forming polygons on the plane. These number sequences date back to the ancient Greeks. The polygonal numbers have been generalized to polyhedral numbers by the ancient Greeks and, more recently, to all higher dimensions by Kim \cite{hkk:pn} and then by Kim and Lee \cite{hkk:pnp}. For more on polytope numbers see also \cite{dd:fn}.

To construct the polytope number sequences, suppose that we have a uniform $d$-dimensional polytope ${P}^d$. We construct the sequence $\{P^d(n)\}_{n\geq 1}$ using induction on $n$. By convention for any polytope, $P^d(1)=1$. Suppose that we have constructed $P^d(n-1)$, which is represented by a set of points forming the polytope $X$ in $d$-dimensional Euclidean space where $X$ is similar to ${P}^d$. We take a vertex $\mathbf{x}$ of $X$ and extend the edges containing $\mathbf{x}$ to include one additional point and use these to create a new polytope $Y$, which contains $X$ and is similar to $X$ and ${P}^d$. Then $P^d(n)-P^d(n-1)$ is the additional points added to $X$ to form $Y$. This difference is computed by summing the points in the interior of each  $k$-facet added to $X$ to make $Y$ for each $0\leq k \leq d-1$. Each $k$-facet is treated as a $k$-dimensional polytope and the points in the interior for that polytope are the $n^{\textrm{th}}$ number in that polytope sequence minus all of the points on its exterior. For more on this process see \cite[Section 1]{hkk:pn} and \cite[Section 3]{hkk:pnp}.

H. K. Kim gives formulas for the $d$-dimensional regular polytope number sequences \cite{hkk:pn}. We use $\alpha^d(n)$, $\beta^d (n)$, and $\gamma^d(n)$ to denote the $n^{\textrm{th}}$ number in the polytope sequences for the $d$ dimensional simplex, cross-polytope, and measure-polytope, respectively. With this notation, Kim's formulas are
$$\alpha^d(n)=\binom{n+d-1}{d} \textrm{ , }\hspace{.2 in} \beta ^d(n)=\sum _{i=0}^{d-1} \binom{d-1}{i} \alpha^d(n-i)$$
$$\textrm{ and } \gamma^d(n)=\sum _{i=0}^{d-1} \euler{d}{i} \alpha^d(n-i)$$
where $\euler{d}{i}$ are the Eulerian numbers \cite{hkk:pn}. 
Kim and Lee claim that any polytope number sequence for a $d$-dimensional polytope can be written as $$P^d(n)=\sum _{i=0}^{d-1} a_i \;\alpha^d(n-i)$$
where $a_0=1$ and $a_i\geq 0$ for each $i$\cite[Theorem 4.1]{hkk:pnp}. This formula will be called the $d$-dimensional simplex decomposition of the polytope number sequence. 
\begin{theorem}
\label{thm:premain}
Let $P$ be a $d$-dimensional convex polytope. There exists a triangulation of $P$ such that the polytope number sequence $P^d(n)$ can be written as a linear combination
$$P^d(n)= \sum _{j=0}^{d-1} h_j \; \alpha ^d(n-j)$$
where the $h_j$'s are the components of the $h$-vector of a pointed triangulation of the polytope $P$.
\end{theorem}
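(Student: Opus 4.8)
The plan is to read the simplex decomposition of Kim and Lee as the half-open decomposition induced by a triangulation of $P$, and to identify the multiplicities that appear with the $h$-vector of that triangulation. The starting point is that $\alpha^d(n)=\binom{n+d-1}{d}$ is exactly the number of lattice points in the $(n-1)$-st dilate of a unimodular $d$-simplex, so $\sum_{n\ge 1}\alpha^d(n)\,t^n=t/(1-t)^{d+1}$, and more generally a half-open unimodular $d$-simplex from which $j$ facets have been removed contributes the shifted sequence $\alpha^d(n-j)$ (its generating function picks up a factor $t^{j}$). Consequently, if the points counted by $P^d(n)$ can be partitioned into the points of half-open unimodular simplices, with $h_j$ of them missing exactly $j$ facets, then $P^d(n)=\sum_j h_j\,\alpha^d(n-j)$ is immediate, and the $h_j$ are by definition the $h$-vector of the underlying partitionable triangulation.

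First I would produce the triangulation from the inductive construction of $P^d(n)$ itself. That construction grows $P$ from a fixed vertex $\mathbf{x}$ by repeatedly extending the edges through $\mathbf{x}$, and this distinguished vertex makes a pulling (pointed) triangulation $\mathcal{T}$ of $P$ with apex $\mathbf{x}$, using only the vertices of $P$, the natural candidate. I would verify that $\mathcal{T}$ is shellable, hence partitionable, so that its faces split into Boolean intervals $[R_i,\sigma_i]$ indexed by the maximal simplices $\sigma_i$, and record the standard fact that $h_j=\#\{i:|R_i|=j\}$. Because $\mathcal{T}$ introduces no interior vertex and is pointed, I expect the restriction faces to satisfy $|R_i|\le d-1$; together with the automatic vanishing $h_{d+1}=0$ for a triangulated ball and $h_d=0$ (which reflects the absence of interior lattice points in the base realization), this is what forces the sum to run only over $0\le j\le d-1$, matching the range in Kim and Lee.

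The combinatorial heart is then to match the growth process with the half-open pieces. Orienting the half-open structure by the shelling order, the points added at each stage of the construction should fill, bijectively, the points of the half-open simplex $\sigma_i\setminus(\text{removed facets})$, where the removed facets are precisely those recorded by $R_i$. Summing the contributions $\alpha^d(n-|R_i|)$ over $i$ and grouping by $|R_i|$ yields $P^d(n)=\sum_{j=0}^{d-1}h_j\,\alpha^d(n-j)$. Since the sequences $\alpha^d(n),\alpha^d(n-1),\dots,\alpha^d(n-d+1)$ are linearly independent and the normalization $a_0=1=h_0$ holds, the representation is unique, so the coefficients $a_j$ of Kim and Lee must coincide with the $h_j$, which proves the theorem.

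The step I expect to be the main obstacle is exactly this last matching: showing that the purely combinatorial point-adding process — defined recursively, treating each added $k$-face as a $k$-dimensional polytope in its own right — literally realizes the half-open decomposition of a single pointed triangulation, with the shift of each piece equal to the size of its restriction face. I would attack it by induction on the dimension $d$, using the recursion over the $k$-faces of $P$ both to build a compatible shelling order (so that the triangulations induced on the faces are the links and restrictions appearing in $\mathcal{T}$) and to carry the half-open bookkeeping from dimension $k$ up to dimension $d$. Ensuring that the shelling orders chosen on the various faces are mutually compatible, and that pointedness is preserved under this recursion, is the delicate point.
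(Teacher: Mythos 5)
Your outline is, at its core, the paper's own strategy: use the pointed (pulling) triangulation $\mathcal{C}_P$ with apex at the construction vertex, partition it into Boolean intervals indexed by the maximal simplices, read off $h_j$ as the number of intervals whose minimal face has $j$ vertices (Theorem \ref{thm:hvec}), invoke $h_d=h_{d+1}=0$ to restrict the sum to $j\leq d-1$, and match the recursive point-adding process against this decomposition. But the two steps you leave open are genuine gaps, and the first one is the entire substance of the theorem. The step you yourself call ``the main obstacle'' --- that the recursively defined sequence $P^d(n)$ literally fills each half-open piece with $\alpha^d(n-|R_i|)$ points --- is resolved in the paper in a simpler form that needs no shelling and no compatibility conditions between faces. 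Lemma \ref{lem:inttri} proves, by induction on dimension directly from the recursion $P(n)=P(n-1)+\sum_{\mathbf{v}_P\notin F}F(n)^\#$, that $P(n)=\sum_{\alpha^i_\beta\in\mathcal{C}_P}\alpha^i(n)^\#$, a sum of \emph{interior} simplex numbers over faces of all dimensions; the only geometric input is the cone structure of the pointed triangulation (each simplex containing $\mathbf{v}_P$ is $\conv(\{\mathbf{v}_P\}\cup\alpha^i_\beta)$ for a unique simplex not containing it) together with $\alpha^{i+1}(n)^\#=\sum_{j=2}^{n-1}\alpha^i(j)^\#$. The passage from there to $\sum_j h_j\,\alpha^d(n-j)$ is pure arithmetic (Theorem \ref{thm:numseq}): substitute $f_i=\sum_j h_j\binom{d+1-j}{i+1-j}$ and apply Vandermonde's convolution to get $\sum_{i\geq j-1}\binom{d+1-j}{i+1-j}\alpha^i(n)^\#=\alpha^d(n-j)$. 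Your proposed route --- building mutually compatible shelling orders on all faces and carrying half-open bookkeeping up through the dimensions --- would have to reprove exactly this content in a harder setting; moreover shellability is not needed at all, since partitionability comes free from the generic-point argument (Theorem \ref{thm:part}).

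The second gap is that $h_d=0$ is asserted only via the parenthetical heuristic about ``absence of interior lattice points,'' and $|R_i|\leq d-1$ is something you ``expect.'' Neither is automatic: $h_{d+1}=0$ does hold for any triangulated ball (vanishing reduced Euler characteristic), but $h_d=0$ can fail --- e.g., coning a triangle over its barycenter gives a $2$-ball with $h$-vector $(1,1,1,0)$ --- and it holds here precisely because the triangulation is pointed, i.e., has no interior vertices and is a cone. The paper proves this in Lemmas \ref{lem:eq} and \ref{lem:hd}: since $\mathcal{C}_P$ is a cone over $\link(\mathbf{v}_P,\mathcal{C}_P)$, one has $h_i(\mathcal{C}_P)=h_i(\link(\mathbf{v}_P,\mathcal{C}_P))$ for $i\in[d]$ and $h_{d+1}(\mathcal{C}_P)=0$, and the link is a contractible $(d-1)$-dimensional complex, so its top $h$-entry vanishes by the Euler characteristic computation. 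Relatedly, your opening framing in terms of unimodular lattice simplices is only heuristic for a general polytope: the points counted by $P^d(n)$ are not lattice points of dilates of $P$ in any canonical lattice, so the statement that each simplex of the triangulation carries exactly the simplex-number pattern is not something you may assume --- it is precisely what Lemma \ref{lem:inttri} establishes.
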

If $P^d(n)$ is the polytope number sequence for a given polytope, we use $P^d(n)^\#$ to represent the interior number sequence. This sequence is given by taking $P^d(n)$ and subtracting the points that are on the exterior of this polytope in the $d$-dimensional space it inhabits.

\begin{theorem}
  \label{Thm:Main}
Let $P$ be a $d$-dimensional convex polytope. The interior polytope number sequence $P^d(n)^\#$ can be written as a linear combination
$$P^d(n)^\#= \sum _{j=0}^{d-1} h_j \; \alpha ^d(n-d-1+j)$$
where the $h_j$'s are the components of the $h$-vector of a pointed triangulation of the polytope $P$.
\end{theorem}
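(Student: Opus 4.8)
The plan is to recycle the half-open simplex bookkeeping behind Theorem~\ref{thm:premain}, but to apply it to the \emph{interior} faces of the triangulation rather than to all of its faces. First I would fix the pointed triangulation $\mathcal{T}$ of $P$ supplied by Theorem~\ref{thm:premain}, together with the partition of its face poset into intervals $[R_i,F_i]$, where the $F_i$ range over the $d$-simplices (facets) of $\mathcal{T}$ and $h_j=\#\{i:|R_i|=j\}$. The geometric content of that theorem is that the configuration points lying in the half-open simplex $\bigsqcup_{R_i\subseteq H\subseteq F_i}\mathrm{relint}(H)$ number exactly $\alpha^d(n-|R_i|)$: a closed $d$-simplex carries $\alpha^d(n)$ points and deleting the $|R_i|$ facets opposite the vertices of $R_i$ strips off the corresponding boundary layers, leaving $\alpha^d(n-|R_i|)$. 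Summing over $i$ recovers $P^d(n)=\sum_j h_j\,\alpha^d(n-j)$, and pointedness forces $h_d=h_{d+1}=0$, so the index genuinely runs only to $d-1$.

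Next I would observe that $P^d(n)^\#$ counts precisely those configuration points whose carrier face in $\mathcal{T}$ is an \emph{interior} face, i.e.\ a face not contained in $\partial P$; hence $P^d(n)^\#=\sum_{G\text{ interior}}\bigl(\#\text{ points in }\mathrm{relint}(G)\bigr)$. The naive idea of restricting the original partition to interior faces overcounts, since an interior face such as a diagonal is shared by two simplices, so the right move is to partition the interior faces afresh. The heart of the argument is therefore a reciprocity for the partition: the interior faces of the shellable $d$-ball $\mathcal{T}$ admit their own partition into intervals $[G_i,F_i]$ topped by the $d$-simplices of $\mathcal{T}$, and the number of intervals whose bottom $G_i$ has size $k$ equals $h_{d+1-k}$. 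In other words, the interior carries the \emph{reversed} $h$-vector.

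Granting this, the very same half-open count shows that an interior interval with bottom of size $k$ carries $\alpha^d(n-k)$ of the configuration points, and there are $h_{d+1-k}$ such intervals, so
\[
P^d(n)^\#=\sum_{k} h_{d+1-k}\,\alpha^d(n-k)=\sum_{j=0}^{d-1} h_j\,\alpha^d(n-d-1+j),
\]
after reindexing by $j=d+1-k$; this is exactly the claim. The single-simplex case ($h$-vector $(1,0,\dots)$, whose reverse has a single bottom of size $d+1$) specializes to the reciprocity $\alpha^d(m)^\#=\alpha^d(m-d-1)$, which I would verify directly from $\binom{a}{d}=(-1)^d\binom{d-a-1}{d}$; this is the combinatorial shadow of Ehrhart--Macdonald reciprocity and is a useful consistency anchor.

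I expect the interior-partition reciprocity to be the main obstacle. I would prove it from the shelling $F_1,\dots,F_t$ underlying the partition, showing that the faces of $\mathcal{T}$ not lying on $\partial P$ form a partitionable relative complex whose intervals are topped by the $d$-simplices and whose bottom-size distribution is the reverse of $(h_0,\dots,h_{d-1})$. This is precisely the statement that the interior $h$-vector of a Cohen--Macaulay (here shellable) ball is the reverse of its $h$-vector, a Dehn--Sommerville-type relation for balls; as a fallback I can invoke Stanley's reciprocity for relative Cohen--Macaulay complexes rather than build the reindexing bijection by hand. As a sanity check I would run the two smallest nontrivial cases, the square (two triangles, $h=(1,1)$) and the pentagon (three triangles, $h=(1,2)$), where the formula returns $(n-2)^2$ and $\tfrac12(n-2)(3n-5)$, matching both direct interior-point counts and the reciprocity value $(-1)^d P^d(2-n)$.
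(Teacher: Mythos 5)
Your proposal is correct in outline, and its skeleton coincides with the paper's own proof: partition the interior faces of the pointed triangulation into intervals topped by the $d$-simplices, show that the bottom-size distribution of that partition is the reversed $h$-vector, count the configuration points carried by each interval as $\alpha^d(n-k)$, and reindex using $h_d=h_{d+1}=0$ --- this is exactly the content of Lemma \ref{lem:intpart}, Corollary \ref{cor:hk}, and Theorem \ref{thm:int}. Where you genuinely diverge is in how the interior-partition reciprocity (your ``main obstacle'') gets proved, and here the paper is both lighter and safer. It reuses the very same generic point $\mathbf{x}$ from the partitionability proof of Theorem \ref{thm:part}: for each $d$-simplex $F$ it takes $D_F$ to be the vertices opposite the facets of $F$ \emph{not} visible from $\mathbf{x}$, and the mirror image of the visibility argument shows that the intervals $[D_F,F]$ tile precisely the faces avoiding $\partial P$; since $|D_F|+|G_F|=d+1$, the identity $k_i=h_{d+1-i}$ is immediate, with no new machinery. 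Your fallback via Stanley's reciprocity for relative Cohen--Macaulay complexes is legitimate (triangulated balls are Cohen--Macaulay, as that property is topologically invariant), just considerably heavier than needed. Your primary route, however, has a real gap: you invoke ``the shelling $F_1,\dots,F_t$ underlying the partition,'' but the partition behind Theorem \ref{thm:premain} is the Kleinschmidt--Smilansky visibility partition \cite{ks:nrssp}, which is \emph{not} induced by a shelling --- that is precisely why it is worth citing, since triangulations of convex polytopes can fail to be shellable (Rudin's ball is a non-shellable triangulation of a tetrahedron). To run a reversed-shelling, Dehn--Sommerville-type argument you would first need to show the pointed (pulling) triangulation is regular, hence shellable; this is true but is exactly the extra work the paper's complementary-visibility trick avoids. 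Your consistency checks (the identity $\alpha^d(m)^\#=\alpha^d(m-d-1)$, the square giving $(n-2)^2$, and the pentagon giving $\tfrac12(n-2)(3n-5)$) are all correct.
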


This paper is organized as follows: Section 2 gives the preliminaries, including the proof that every convex polytope has a pointed triangulation; Section 3 gives a formal definition for the polytope number sequence and a proof of Theorem \ref{thm:premain}; and the proof of Theorem \ref{Thm:Main} is in Section 4.

\section{Preliminaries}

For a subset $K$ of $\mathbb{R}^d$, let $\conv (K)$ be the convex hull of $K$. A polytope is $\conv (K)$ for some finite subset $K$ of $\mathbb{R}^d$. The dimension of a polytope $P$ is the dimension of its affine hull and is denoted $\dim (P)$.

\begin{definition}
A {\em polytopal complex} is a finite collection $\mathcal{C}$ of polytopes that satisfies the following conditions:
\begin{enumerate}
\item The empty polytope is in $\mathcal{C}$.
\item If $P\in \mathcal{C}$, then every face of $P$ is also in $\mathcal{C}$.
\item The intersection of two polytopes $P$ and $Q$ both in $\mathcal{C}$ is a face of both of the polytopes $P$ and $Q$.
\end{enumerate}
The dimension of $\mathcal{C}$ is defined by $\dim(\mathcal{C})=\max \{\dim (P) | P\in \mathcal{C} \}$ and the underlying set of $\mathcal{C}$ is the set $|\mathcal{C} | = \bigcup _{P\in \mathcal{C}} P$. 
\end{definition}
A polytopal complex $\mathcal{C}$ is called {\em pure} if for each $P\in \mathcal{C}$, there is a $Q\in \mathcal{C}$ with $P\subseteq Q$ and $\dim(Q)=\dim(\mathcal{C})$. A polytopal complex $\mathcal{C}$ is called a simplicial complex if every polytope in $\mathcal{C}$ is a simplex.

A {\em subdivision} of polytope $P$ is a polytopal complex $\mathcal{C}_P$ with the underlying space $|\mathcal{C}_P|=P$. A {\em triangulation} of a polytope $P$ is a subdivision $\mathcal{C}_P$, which is a simplicial complex.

Given a polytopal complex $\mathcal{C}$ of dimension $d$, we can define the set of $k$-faces to be $\mathcal{F}_k(\mathcal{C})=\{ P\in \mathcal{C} | \dim(P)=k \}$. In a $d$-dimensional polytope or polytopal complex, the facets are the $(d-1)$-dimensional faces. We also will denote by $\Ver (P)$ the set of vertices of the polytope $P$.

We will focus on a certain type of triangulation of a polytope called a pointed triangulation; thus we concentrate on simplices. We will use $\alpha ^d$ to represent a simplex of dimension $d$.

\begin{definition}
A triangulation $\mathcal{C}_P$ of a $d$-polytope $P$ will be called a {\em pointed triangulation} if
\begin{enumerate}
\item For each $k\in \{0,1,\dots d\}$, each $k$-face $F$ of $P$ has a triangulation $\mathcal{C}_F$ such that there is a designated vertex $\mathbf{v}_F\in \Ver(F)$, called the {\em apex} of $F$, satisfying $\mathbf{v}_F\in \alpha^k_i$ for all $\alpha^k_i\in \mathcal{C}_F$. (i.e. the apex of $\mathcal{C}_F$ is contained in every $k$-simplex in the triangulation $\mathcal{C}_F$.)
\item For any two faces $F_1$ and $F_2$ of $P$, if $\{\mathbf{v}_{F_1},\mathbf{v}_{F_2}\}\subset F_1\cap F_2$, then $\mathbf{v}_{F_1}=\mathbf{v}_{F_2}$.
\item For each face $F$ of $P$, if $\mathbf{w}\in \Ver(F)\setminus \{\mathbf{v}_{F}\}$, then the edge $\conv(\{\mathbf{v}_{F}, \mathbf{w}\})\in \mathcal{C}_F$.
\end{enumerate}
\end{definition}
We also define $V(P)=\{\mathbf{v}_{F} | F \textrm{ is a face of }P\}$. $V(P)$ is called the set of apexes for the pointed triangulation $\mathcal{C}_P$ and depends on the pointed triangulation. We call $\mathcal{C}_P$ the $V(P)$-pointed triangulation.

Kim and Lee have shown that every convex polytope has a pointed triangulation \cite[Theorem 2.1]{hkk:pnp}. We will give the proof again here for completeness.

\begin{theorem}[Kim and Lee, {\cite[Theorem 2.1]{hkk:pnp}}]
\label{thm:ptri}
Every polytope has a pointed triangulation.
\end{theorem}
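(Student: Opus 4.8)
The plan is to construct the triangulation by the standard \emph{pulling} (or placing) procedure, using a single global linear order on the vertices of $P$ so that all apex choices are made simultaneously and coherently. First I would fix an arbitrary total order $\prec$ on $\Ver(P)$, and for every face $F$ of $P$ declare its apex $\mathbf{v}_F$ to be the $\prec$-minimal vertex of $F$. This single choice disposes of condition (2) almost for free: if $\{\mathbf{v}_{F_1},\mathbf{v}_{F_2}\}\subset F_1\cap F_2$, then $\mathbf{v}_{F_2}\in F_1$ forces $\mathbf{v}_{F_1}\preceq\mathbf{v}_{F_2}$, while symmetrically $\mathbf{v}_{F_1}\in F_2$ forces $\mathbf{v}_{F_2}\preceq\mathbf{v}_{F_1}$, whence $\mathbf{v}_{F_1}=\mathbf{v}_{F_2}$.

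Next I would define $\mathcal{C}_F$ by induction on $\dim F$. For $\dim F=0$ take the single vertex; for $\dim F\geq 1$ let $\mathbf{v}_F$ be the apex and let $\mathcal{C}_F$ be the simplicial complex whose maximal simplices are the cones
$$\conv(\{\mathbf{v}_F\}\cup\sigma),\qquad \sigma\in\mathcal{C}_G,\ \dim\sigma=\dim G,$$
where $G$ ranges over the facets of $F$ not containing $\mathbf{v}_F$ and $\mathcal{C}_G$ is the already-constructed pulling triangulation of $G$. Because each such $G$ has dimension $\dim F-1$ and $\mathbf{v}_F\notin\aff(G)$, every listed cone is a genuine simplex of dimension $\dim F$, and each one contains $\mathbf{v}_F$; this is precisely condition (1). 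I would also record the coherence property that for any subface $F'\subseteq F$ the restriction of $\mathcal{C}_F$ to $F'$ equals $\mathcal{C}_{F'}$, which holds because the apex of $F'$ is again $\prec$-minimal and the recursion respects passage to subfaces; this is what makes the family $\{\mathcal{C}_F\}$ a single consistent triangulation.

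For condition (3), given $\mathbf{w}\in\Ver(F)\setminus\{\mathbf{v}_F\}$ I would first note that some facet $G$ of $F$ contains $\mathbf{w}$ but not $\mathbf{v}_F$; otherwise $\mathbf{v}_F$ would lie in the intersection of all facets of $F$ through $\mathbf{w}$, which is the minimal face $\{\mathbf{w}\}$, contradicting $\mathbf{v}_F\neq\mathbf{w}$. By induction every vertex of $G$ occurs in $\mathcal{C}_G$, so $\mathbf{w}$ is a vertex of some $\sigma\in\mathcal{C}_G$; coning with $\mathbf{v}_F$ yields a simplex of $\mathcal{C}_F$ containing both $\mathbf{v}_F$ and $\mathbf{w}$, and the edge $\conv(\{\mathbf{v}_F,\mathbf{w}\})$ is a face of that simplex, hence lies in $\mathcal{C}_F$.

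The main obstacle is not the pointedness bookkeeping—conditions (1)--(3) fall out cleanly from the $\prec$-minimal apex choice—but rather the verification that the pulling cones cover $F$ and meet face-to-face, so that each $\mathcal{C}_F$ is a bona fide simplicial complex rather than merely a covering of $F$ by simplices. Covering follows since every point of $F$ lies on a segment from $\mathbf{v}_F$ to a point of some facet not containing $\mathbf{v}_F$; the face-to-face condition is the classical technical core of the placing construction, and I expect to carry it out by considering two distinct facets $G,G'$ of $F$ (neither containing $\mathbf{v}_F$) and using that $G\cap G'$ is a common face on which, by the coherence property above, $\mathcal{C}_G$ and $\mathcal{C}_{G'}$ restrict to the same triangulation, so the corresponding cones agree along their shared boundary.
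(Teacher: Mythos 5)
Your proposal is correct and is essentially the paper's own proof: the paper orders the vertices by a generic linear functional (which is just one device for producing your total order $\prec$), takes each face's apex to be its minimal vertex, and defines $\mathcal{C}_F$ via chains of faces $G_1 \supsetneq G_2 \supsetneq \cdots$ with $\mathbf{v}_{G_i}\notin G_{i+1}$, which is exactly the unrolled, non-recursive form of your cone construction; the verifications of conditions (1)--(3) and of the covering step by induction on dimension are the same. If anything, you are more explicit than the paper about the face-to-face property of the cones, which the paper disposes of with the bare assertion that ``by definition, $\mathcal{C}_P$ is a simplicial complex.''
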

\begin{proof}
Let $P\subset \mathbb{R}^n$ be a $d$-polytope. There exists a linear function in general position $\mathbf{cx}$ with respect to $P$ (i.e. a linear function such that for all $\mathbf{v}\neq \mathbf{w}\in \Ver (P)$, $\mathbf{cv}\neq \mathbf{cw}$). See \cite[Lemma 1.5]{hkk:pnp} and \cite[Lemma 3.4]{gz:lop}. For each face $F\in \mathcal{F}(P)$, define $\mathbf{v}_F\in \Ver (F)$ to be the vertex of $F$ with the minimum value of the linear function (i.e. $\mathbf{c} \mathbf{v}_F < \mathbf{c w}$ for all $\mathbf{w}\in \Ver (F)\setminus \{\mathbf{v}_F \}$).

Now for each $F\in \mathcal{F}(P)$, we define $V(F)=\{ \mathbf{v}_G \vert G\in \mathcal{F}(F) \}$. We also define 
$$\mathcal{C}_F = \{\emptyset \} \cup \left\{ \left. \conv ( \{\mathbf{v}_{G_1}, \mathbf{v}_{G_2}, \dots , \mathbf{v}_{G_k} \} ) \right| k\in [d], G_i \in \mathcal{F}(F), G_i \supsetneq G_{i+1}, \mathbf{v}_{G_i} \notin G_{i+1} \right\}.$$

We will now show that $\mathcal{C}_P$ is the $V(P)$-pointed triangulation. First we will show that $\mathcal{C}_P$ is a triangulation of $P$. By definition, $\mathcal{C}_P$ is a simplicial complex and $|\mathcal{C}_P|\subseteq P$, so we are left to show that $|\mathcal{C}_P|\supseteq P$. We will proceed by induction on the dimension. By the definition, we see that $$P=\bigcap _{\substack{F\in \mathcal{F}_{d-1}(P)\\\mathbf{v}_P\in F}} \conv ( \{\mathbf{v}_P \} \cup F ).$$
Suppose that $\mathbf{x}\in P$; then there exists $F\in \mathcal{F}_{d-1}(P)$ such that $\mathbf{v}_P\notin F$ and $\mathbf{x}\in \conv ( \{\mathbf{v}_P \} \cup F )$. By the induction hypothesis $|\mathcal{C}_F | = F$ and so there exists a $(d-1)$-simplex $\alpha ^{d-1}$ in $\mathcal{C}_F$ with $\mathbf{x}\in \conv (\{\mathbf{v}_P\} \cup \alpha ^{d-1} )$. By the definition of $\mathcal{C}_P$, $\conv (\{\mathbf{v}_P\} \cup \alpha ^{d-1} )$ is an element of $\mathcal{C}_P$ and thus $x\in |\mathcal{C}_P |$.

We are left now to show that the triangulation $\mathcal{C}_P$ is the $V(P)$-pointed triangulation. By the definition of $\mathcal{C}_P$, conditions 1 and 3 of the pointed triangle definition are clearly satisfied. To see condition 2, let $F_1$ and $F_2$ be two faces of $P$ such that $\{\mathbf{v}_{F_1}, \mathbf{v}_{F_2} \} \subseteq F_1 \cap F_2$. Since $F_1\cap F_2$ is a face of $P$ that is contained in both $F_1$ and $F_2$, we see that $\mathbf{v}_{F_1\cap F_2}$ must be the same as $\mathbf{v}_{F_1}$ and $\mathbf{v}_{F_2}$. This shows that $\mathcal{C}_P$ is the $V(P)$-pointed triangulation.
\end{proof}
Throughout the rest of this paper, we will use the $V(P)$-pointed triangulation $\mathcal{C}_P$ for each convex polytope $P$.

\begin{definition}
For a convex polytope $P$, let $\mathcal{C}_P$ be the $V(P)$-pointed triangulation. We will define the exterior polytopal complex $\mathcal{C}_{\partial P}$ to be the collection of faces $F\in \mathcal{C}_P$ such that $F\in G$ for some $G\in \mathcal{F} (P)\setminus \{P\}$.
\end{definition} 

\begin{definition}[See page 237 of \cite{gz:lop}]
Given a polytopal complex $\mathcal{C}$ and a vertex $\mathbf{v}$ of $P$, we define the star of the vertex $\mathbf{v}$, denoted $\textrm{star}(\mathbf{v},\mathcal{C})$, to be the polytopal subcomplex of all faces that contain $\mathbf{v}$ and their faces. We also define the link of $\mathbf{v}$, denoted $\link(\mathbf{v},\mathcal{C})$, to be the subcomplex of all faces in $\mathrm{star}(\mathbf{v},\mathcal{C})$ that do not have $\mathbf{v}$ as a vertex. 
\end{definition}
If $\mathcal{C}$ is pure of dimension $d$, then so is $\mathrm{star}(\mathbf{v},\mathcal{C})$, and $\link (\mathbf{v},\mathcal{C})$  is pure of dimension $d-1$.
\begin{definition}
Let $\mathcal{C}$ be a pure $d$-dimensional simplicial complex. A partition of $\mathcal{C}$ is a disjoint collection of intervals $[G_F,F]$ for each $F\in \mathcal{F}_d(\mathcal{C})$ ($[G_F,F]=\{ G : G_F \subseteq G \subseteq F \}$) where $\mathcal{F}=\cap _{F\in \mathcal{F}_d(\mathcal{C})} [G_F,F]$; if $F\neq F' \in \mathcal{F}_d(\mathcal{C})$, then $[G_F,F]\cap [G_{F'},F']=\emptyset$.
A pure simplicial complex that has a partion is called partitionable.
\end{definition}

\begin{theorem}[See \cite{ks:nrssp} and Proposition 2.8 of \cite{rs:cca}]
\label{thm:part}
Let $\mathcal{C}$ be a pure simplicial complex such that the geometric realization $|\mathcal{C}|$ is convex; then $\mathcal{C}$ is partitionable.
\end{theorem}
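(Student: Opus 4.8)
The plan is to exhibit an explicit partition coming from a generic linear functional, in the spirit of the restriction map of a line shelling, but to verify it directly: since a triangulation of a convex polytope need not be shellable (the classical non-shellable triangulations of a simplex are the obstruction), one cannot simply invoke a shelling order. First I would record the geometric reduction. Since $\mathcal{C}$ is pure of dimension $d$ and $|\mathcal{C}|$ is convex, the underlying set $P=|\mathcal{C}|$ is a convex $d$-polytope and $\mathcal{C}$ is a triangulation of $P$ by $d$-simplices; in particular each codimension-one face $\tau$ of a top simplex $\sigma\in\mathcal{F}_d(\mathcal{C})$ lies in exactly two elements of $\mathcal{F}_d(\mathcal{C})$ when $\tau\not\subseteq\partial P$ (an \emph{interior} facet) and in exactly one when $\tau\subseteq\partial P$. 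Next I would fix a linear functional $\mathbf{c}$ in general position---injective on $\Ver(\mathcal{C})$ and, writing $e$ for its gradient, such that no facet hyperplane $\aff(\tau)$ contains $e$ in its direction space. For each $\sigma$ and each facet $\tau=\sigma\setminus\{v\}$, call $\tau$ a \emph{lower} facet of $\sigma$ if the outward normal of $\sigma$ along $\tau$ has negative inner product with $e$, and an \emph{upper} facet otherwise (genericity makes this sign nonzero). I then set
$$R(\sigma)=\{\,v\in\Ver(\sigma) : \sigma\setminus\{v\}\text{ is an interior lower facet of }\sigma\,\}$$
and propose $\{[R(\sigma),\sigma] : \sigma\in\mathcal{F}_d(\mathcal{C})\}$ as the partition.

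The key translation is that $G\in[R(\sigma),\sigma]$ if and only if $R(\sigma)\subseteq G\subseteq\sigma$, i.e. if and only if $G\subseteq\sigma$ and $G$ is not contained in any interior lower facet of $\sigma$. To prove that these intervals cover $\mathcal{C}$ and are pairwise disjoint, I would introduce a downward flow $\phi\colon\mathcal{C}\to\mathcal{F}_d(\mathcal{C})$ sending a face $G$ to the top simplex entered by moving from a relative interior point of $G$ in the direction $-e$. Concretely, fix $b\in\mathrm{relint}(G)$ and slice by an affine subspace $\Pi$ through $b$ complementary to $\aff(G)$; because $P$ is convex and $d$-dimensional, $C:=P\cap\Pi$ is a convex $(d-\dim G)$-dimensional neighborhood of $b$, and the cells $\{\sigma\cap\Pi : \sigma\supseteq G\}$ subdivide $C$ into cells forming a fan with apex $b$. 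The facets of $\sigma$ that contain $G$ correspond exactly to the walls of $\sigma\cap\Pi$ through $b$, and the labels ``interior'' and ``lower'' transfer to the slice via the projected direction $\bar e$. I would then show that $G\in[R(\sigma),\sigma]$ precisely when the cell $\sigma\cap\Pi$ has no interior lower wall through $b$, and that there is exactly one such cell: when $b$ is interior to $P$ it is the cell met by the ray $b-t\bar e$ for small $t>0$, and when $b\in\partial P$ it is the cell lying lowest against $\partial C$ in the $\bar e$-direction. Setting $\phi(G)$ equal to this cell gives $\phi^{-1}(\sigma)=[R(\sigma),\sigma]$, which yields covering and disjointness at once.

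The main obstacle is exactly this uniqueness statement in the slice, and it is where convexity is indispensable: two cells of $\mathcal{C}$ meeting $G$ may share a face of codimension greater than one, so one cannot argue facet-by-facet, and the link of $G$ need not be convex even though the neighborhood $C$ is. The argument I would give orients the dual graph of the fan (one node per cell, one edge per interior wall through $b$, directed from the cell below a wall to the cell above it) and shows, using convexity of $C$ together with the well-defined sign of $\bar e$ on each wall, that exactly one cell is a source; the downward ray pinpoints it. The boundary case $b\in\partial P$ requires the extra observation that a lower facet lying on $\partial P$ is never interior, so a cell abutting $\partial P$ from below still qualifies, and this is what makes the lowest boundary cell the unique source. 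Once uniqueness is established the theorem follows, since $\mathcal{C}$ is then the disjoint union $\bigsqcup_{\sigma\in\mathcal{F}_d(\mathcal{C})}[R(\sigma),\sigma]$, which is the required partition.
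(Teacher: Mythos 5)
Your strategy---replacing the paper's generic \emph{interior} point by a generic linear functional, with intervals built from ``interior lower facets''---fails at exactly the spot you flag as the main obstacle: the uniqueness of the source cell is false, and the intervals $[R(\sigma),\sigma]$ are in general not disjoint. Concretely, let $P\subset\mathbb{R}^2$ be the convex pentagon with vertices $v=(0,0)$, $w_0=(2,10)$, $w_1=(1,11)$, $w_2=(-1,11)$, $w_3=(-2,10)$, triangulated by the fan at $v$: $\sigma_1=\conv(\{v,w_0,w_1\})$, $\sigma_2=\conv(\{v,w_1,w_2\})$, $\sigma_3=\conv(\{v,w_2,w_3\})$, and take $e=(\varepsilon,1)$ with $\varepsilon>0$ small and generic. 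The only interior facets are $\conv(\{v,w_1\})$ and $\conv(\{v,w_2\})$; a direct computation shows each is an \emph{upper} facet of the outer triangle containing it ($\sigma_1$, resp.\ $\sigma_3$) and a \emph{lower} facet of the middle triangle $\sigma_2$. Hence $R(\sigma_1)=R(\sigma_3)=\emptyset$ and $R(\sigma_2)=\{w_1,w_2\}$, so the three intervals contain $8+2+8=18$ faces while $\mathcal{C}$ has only $16$: the empty face and the vertex $v$ lie in both $[\emptyset,\sigma_1]$ and $[\emptyset,\sigma_3]$. In your dual-graph language, for $b=v$ both walls through $v$ are oriented into $\sigma_2$, so $\sigma_1$ and $\sigma_3$ are both sources; convexity of $C=P$ does not prevent this, because walls lying on $\partial P$ carry no orientation data, and your rule that boundary lower facets ``don't count'' is precisely what lets several cells along the lower boundary qualify simultaneously. (Switching to upper facets does not help: the reflected pentagon is then a counterexample.)

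This is why the paper's proof (following Kleinschmidt--Smilansky and Stanley) insists on a generic point $\mathbf{x}$ in $P$ itself rather than a point at infinity: for every face $H$ and every $\mathbf{y}$ in the relative interior of $H$, convexity keeps the whole segment $[\mathbf{x},\mathbf{y}]$ inside $P$, so ``the $d$-simplex occupied by the segment just before it reaches $\mathbf{y}$'' is always defined, and that simplex is the unique one whose interval (formed from \emph{all} facets visible from $\mathbf{x}$, with no interior/boundary distinction) contains $H$. A linear functional is the degenerate limit $\mathbf{x}\to\infty$, and for faces on the side of $\partial P$ facing that limit point the defining ray approaches them from \emph{outside} $P$---exactly where your assignment becomes ambiguous. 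To salvage a functional-based argument you would need an auxiliary tie-breaking rule for such boundary faces, which in effect re-introduces a base point inside $P$; as written, the proposed collection of intervals is simply not a partition.
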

We will include the proof for completeness, but it is essentially the same as \cite{ks:nrssp} and as outlined in \cite{rs:cca}.
\begin{proof}
Let $P$ be the geometric realization of $\mathcal{C}$ and let $\mathbf{x}$ be a generic point in $P$. (We need $\mathbf{x}$ to not be contained in the affine hull of any simplex in $\mathcal{C}$.) For each $F\in \mathcal{F}_d \mathcal{C}$, let $R_F$ be the set of facets of $F$ visible from $\mathbf{x}$. (By ``visible" we mean that a ray from $\mathcal{x}$ to the interior of the facet will not intersect $F$ before reaching the facet.) Now let $G_F$ be the set of vertices of $F$ that are each opposite one facet in $R_F$. Notice that if $F$ is the $d$-simplex containing $\mathbf{x}$, then $R_F=G_F=\emptyset $.

We claim that the intervals $[G_F,F]$ for each $F\in \mathcal{F}_d (\mathcal{C})$ form a partition of $\mathcal{C}$. Let $H$ be a face of $\mathcal{C}$. Clearly there is a unique $d$-simplex $F_0$ so that the line segment $[\mathbf{x},\mathbf{y}]$ intersects $F_0$ at points before $\mathbf{y}$ for all $\mathbf{y}$ in the interior of $H$ (i.e. not in any face of smaller dimension). Notice that any facet of $F_0$ either contains $H$ or is opposite a vertex contained in $H$. The facets of $F_0$ that contain $H$ are exactly those that are not visible from $\mathbf{x}$. This shows that $H  \in [G_{F_0}, F_0 ]$.

Suppose that $H\in [G_F,F]$ for some $F\in \mathcal{F}_d (\mathcal{C})$. Then $G_F\subseteq H$, which implies that the $H$ is not contained in any facet of $F$ visible from $\mathbf{x}$. Thus if we take an interior point $\mathbf{y}$ of $H$ and move into any facet of $F$ containing $H$, the ray from $\mathbf{x}$ will encounter $F$ before reaching that facet. This implies that the line segment $[\mathbf{x},\mathbf{y}]$ intersects $F$ before the point $\mathbf{y}$. Then $F$ is the $F_0$ from the previous paragraph. In this way if $F, F'\in \mathcal{F}_d (\mathcal{C})$ are not the same, then $[G_{F},F]\cap [G_{F'},F'] = \emptyset $. This finishes the proof that we have a partition of the complex $\mathcal{C}$. 
\end{proof}
\begin{corollary}
\label{cor:part}
Let $P$ be a convex polytope with a $V(P)$-pointed triangulation $\mathcal{C}_P$. The pointed triangulation $\mathcal{C}_P$ is partitionable.
\end{corollary}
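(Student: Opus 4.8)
The plan is to obtain Corollary \ref{cor:part} as an immediate instance of Theorem \ref{thm:part}, so the real work is only to verify that the $V(P)$-pointed triangulation $\mathcal{C}_P$ meets that theorem's hypotheses: that $\mathcal{C}_P$ is a \emph{pure} simplicial complex whose geometric realization is convex. Two of these are free. Since $\mathcal{C}_P$ is by definition a triangulation, it is a simplicial complex; and since the proof of Theorem \ref{thm:ptri} shows $|\mathcal{C}_P| = P$, with $P$ a convex polytope by hypothesis, the realization $|\mathcal{C}_P|$ is convex. Thus the entire content of the corollary reduces to proving that $\mathcal{C}_P$ is pure of dimension $d = \dim(P)$, i.e. that every simplex of $\mathcal{C}_P$ is a face of some $d$-simplex.

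To prove purity I would induct on $d$ and exploit the cone structure already exposed in the proof of Theorem \ref{thm:ptri}, where every $d$-simplex of $\mathcal{C}_P$ is shown to have the form $\conv(\{\mathbf{v}_P\} \cup \alpha^{d-1})$ for a $(d-1)$-simplex $\alpha^{d-1}$ of $\mathcal{C}_F$ with $F$ a facet satisfying $\mathbf{v}_P \notin F$. In particular $\mathbf{v}_P$ lies in every top-dimensional simplex, so it is natural to separate simplices according to whether they contain $\mathbf{v}_P$. Writing $\mathcal{B}$ for the subcomplex of simplices of $\mathcal{C}_P$ that avoid $\mathbf{v}_P$, every simplex $\sigma \in \mathcal{C}_P$ either belongs to $\mathcal{B}$ or has the form $\conv(\{\mathbf{v}_P\} \cup \tau)$ for some $\tau \in \mathcal{B}$. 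In both cases it suffices to enlarge the relevant $\tau$ (or $\sigma$ itself) to a $(d-1)$-simplex inside $\mathcal{B}$ and then cone with $\mathbf{v}_P$: if $\tau$ is a face of some $\alpha^{d-1} \in \mathcal{B}$, then $\conv(\{\mathbf{v}_P\} \cup \alpha^{d-1})$ is a $d$-simplex of $\mathcal{C}_P$ containing $\sigma$. Hence purity of $\mathcal{C}_P$ follows once one knows that $\mathcal{B}$ is itself pure of dimension $d-1$.

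I expect the crux to be identifying $\mathcal{B}$ with the induced triangulation of the ``lower'' boundary $\bigcup_{F:\,\mathbf{v}_P\notin F} F$ and running the induction there. The argument I have in mind shows that the top face $G$ of the defining chain of any $\tau \in \mathcal{B}$ satisfies $\mathbf{v}_P \notin G$ (otherwise the apex $\mathbf{v}_G$ would equal $\mathbf{v}_P$ and lie in $\tau$); then, because a face of a convex polytope is the intersection of the facets containing it, some facet $F \supseteq G$ must have $\mathbf{v}_P \notin F$, so that $\tau$ lies in $\mathcal{C}_F$, which is pure of dimension $d-1$ by the inductive hypothesis applied to the $(d-1)$-polytope $F$. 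This places $\tau$ inside a $(d-1)$-simplex of $\mathcal{B}$ and completes the induction. The main obstacle is the bookkeeping that keeps these restrictions consistent, in particular verifying that the apex of each face is computed the same way whether the face is viewed inside $P$ or inside a facet $F$, which is exactly what condition 2 of the pointed-triangulation definition and the fixed linear functional $\mathbf{c}$ guarantee. With $\mathcal{B}$ shown to be pure, $\mathcal{C}_P$ is pure, and Theorem \ref{thm:part} yields the corollary.
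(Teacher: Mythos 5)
Your proposal is correct and follows the same route as the paper: the paper states this corollary without any proof, treating it as an immediate instance of Theorem \ref{thm:part}, since $\mathcal{C}_P$ is a simplicial complex with $|\mathcal{C}_P| = P$ convex. The only difference is that you additionally verify purity of $\mathcal{C}_P$ (via the cone-over-a-facet-triangulation structure from the proof of Theorem \ref{thm:ptri}), a hypothesis of Theorem \ref{thm:part} that the paper leaves implicit; that verification is sound.
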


\begin{definition}\label{def:fvec}
The $f$-vector of a $d$-dimensional polyhedral complex $\mathcal{C}$ is the vector 
$$\mathbf{f}(\mathcal{C})= ( f_{-1}, f_0, f_1, \dots, f_d ) \in \mathbb{N}^{d+2},$$
where $f_k=f_k(\mathcal{C})$ denotes the number of $k$-dimensional faces in $\mathcal{C}$.
\end{definition}

\begin{definition}\label{def:hvec}
The $h$-vector of a $d$-dimensional simplicial complex $\mathcal{C}$ is the vector
$$\mathbf{h}(\mathcal{C})=( h_0, h_1, \dots , h_d, h_{d+1} )\in \mathbb{Z}^{d+2},$$ 
given by the formula 
$$h_k:= \sum _{i=0}^k (-1)^{k-i} \binom{d-i}{d-k} f_{i-1}.$$
\end{definition}

There is another well known equivalent defintion of the $h$-vector of a partitionable simplicial complex as stated in Section 8.3 of Ziegler \cite{gz:lop}.
\begin{theorem}[{Section 8.2 of \cite{gz:lop}}]\label{thm:hvec}
Let $\mathcal{C}$ be a pure $d$-dimensional simplicial complex and let the collection $[G_F,F]$ for each $F\in \mathcal{F}_d (\mathcal{C})$ be a partition of $\mathcal{C}$. Then for each $i\in [d+1]$, 
$$ h_i (\mathcal{C})= \left| \left\{ F : |G_F|=i , F\in \mathcal{F}_d (\mathcal{C}) \right\} \right|.$$
\end{theorem}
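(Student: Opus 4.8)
The plan is to evaluate a single face-counting generating function of $\mathcal{C}$ in two different ways and then match the outcome against the $h$-vector. The key structural observation is that each facet $F \in \mathcal{F}_d(\mathcal{C})$ is a $d$-simplex, hence has exactly $d+1$ vertices, and the interval $[G_F, F]$ is precisely the set of faces obtained from $G_F$ by adjoining some subset of the $(d+1)-|G_F|$ vertices of $F$ that lie outside $G_F$. Because the given collection $\{[G_F,F] : F \in \mathcal{F}_d(\mathcal{C})\}$ is a partition, the intervals are pairwise disjoint and their union is all of $\mathcal{C}$, so I can tally a weight over the faces of $\mathcal{C}$ once by grouping by dimension and once by grouping according to the interval containing each face.

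Concretely, I would assign to a face $H$ the weight $t^{|H|}$, where $|H|$ is its number of vertices. Grouping by dimension and invoking Definition \ref{def:fvec}, and using that a face of dimension $i-1$ has $i$ vertices, gives $\sum_{H \in \mathcal{C}} t^{|H|} = \sum_{i=0}^{d+1} f_{i-1}(\mathcal{C})\, t^{i}$. Grouping instead by intervals, a fixed facet $F$ with $|G_F| = j$ contributes $\sum_{G_F \subseteq G \subseteq F} t^{|G|} = t^{j}(1+t)^{(d+1)-j}$, since adjoining $\ell$ of the $(d+1)-j$ free vertices produces a face with $j+\ell$ vertices and there are $\binom{(d+1)-j}{\ell}$ such choices. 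Writing $h'_i := |\{F \in \mathcal{F}_d(\mathcal{C}) : |G_F| = i\}|$ for the quantity to be identified, and collecting facets according to the value of $|G_F|$, I obtain $\sum_{H \in \mathcal{C}} t^{|H|} = \sum_{i=0}^{d+1} h'_i\, t^{i}(1+t)^{(d+1)-i}$.

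Equating the two evaluations yields the identity $\sum_{i=0}^{d+1} f_{i-1}\, t^{i} = \sum_{i=0}^{d+1} h'_i\, t^{i}(1+t)^{(d+1)-i}$, which is the standard generating-function characterization of the $h$-vector (see Section 8.2 of \cite{gz:lop}). Extracting the coefficient of $t^{k}$ produces the triangular system $f_{k-1} = \sum_{i=0}^{k} \binom{(d+1)-i}{k-i}\, h'_i$, and inverting this relation by the usual alternating binomial inversion recovers the defining formula of Definition \ref{def:hvec} with $h_i$ replaced by $h'_i$; uniqueness of the inversion then forces $h_i(\mathcal{C}) = h'_i$ for every $i$, which is the assertion. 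I expect the combinatorial heart of the argument, the disjointness-driven factorization of each interval's contribution as $t^{|G_F|}(1+t)^{(d+1)-|G_F|}$, to be immediate from the partition hypothesis, while the main technical obstacle will be the index bookkeeping of the inversion: correctly accounting for the empty face ($i=0$, $f_{-1}=1$) and keeping every binomial dimension parameter aligned with the $d+1$ vertices of a facet throughout the computation.
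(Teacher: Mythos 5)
Your proof is correct, and there is in fact no in-paper argument to compare it against: the paper states Theorem \ref{thm:hvec} without proof, deferring to \cite{gz:lop}. Your two-way evaluation of the face-counting polynomial $\sum_{H\in\mathcal{C}} t^{|H|}$ --- once by dimension to get $\sum_{i=0}^{d+1} f_{i-1}t^i$, and once over the partition, with each interval $[G_F,F]$ contributing $t^{|G_F|}(1+t)^{(d+1)-|G_F|}$ --- is precisely the standard Ziegler--Stanley argument (stated there for shellings, but, exactly as you observe, it uses only the partition property, which is why the theorem applies in the paper's merely-partitionable setting). Your closing step is also sound, and the unitriangularity remark ($\binom{d+1-k}{0}=1$ on the diagonal) lets you conclude $h_i'=h_i$ without writing out the inversion at all. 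One caveat on the bookkeeping you rightly flagged as the delicate point: inverting $f_{k-1}=\sum_{i=0}^{k}\binom{d+1-i}{k-i}\,h_i'$ yields $h_k'=\sum_{i=0}^{k}(-1)^{k-i}\binom{d+1-i}{d+1-k}f_{i-1}$, which agrees with the relation $f_i=\sum_{j=0}^{i+1}h_j\binom{d+1-j}{i+1-j}$ that the paper actually uses in proving Theorem \ref{thm:numseq}, but not with the literal formula in Definition \ref{def:hvec}, whose binomial $\binom{d-i}{d-k}$ retains Ziegler's convention for a $(d-1)$-dimensional complex and is off by one for a $d$-dimensional one. So your assertion that the inversion ``recovers Definition \ref{def:hvec}'' holds only after correcting that indexing in the paper's definition; this is a defect of the printed definition, not a gap in your argument, but it is worth stating explicitly which $f$-to-$h$ relation you take as the definition of $h_i(\mathcal{C})$ before invoking uniqueness.
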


\begin{lemma}
\label{lem:eq}
Let $P$ be a $d$-polytope with a $V(P)$-pointed triangulation $\mathcal{C}_P$. Then $\link(\mathbf{v_P},\mathcal{C}_P)$ is partitionable. In addition, $h_{d+1}(\mathcal{C}_P)=0$ and for $i\in [d]$, $h_i(\mathcal{C})=h_i(\link(\mathbf{v_P},\mathcal{C}_P))$. 
\end{lemma}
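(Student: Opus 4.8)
The plan is to exploit the fact that the pointed triangulation $\mathcal{C}_P$ is a cone with apex $\mathbf{v}_P$, and to extract both the partitionability of the link and the $h$-vector identities from a single, conveniently chosen partition of $\mathcal{C}_P$. First I would apply condition~1 of the pointed-triangulation definition to the top face $F=P$: the apex $\mathbf{v}_P$ lies in every $d$-simplex of $\mathcal{C}_P$. Hence $\mathrm{star}(\mathbf{v}_P,\mathcal{C}_P)=\mathcal{C}_P$, and writing $L:=\link(\mathbf{v}_P,\mathcal{C}_P)$ the assignment $\sigma\mapsto\conv(\{\mathbf{v}_P\}\cup\sigma)$ is a bijection from the $(d-1)$-simplices of $L$ onto the $d$-simplices of $\mathcal{C}_P$; here $L$ is pure of dimension $d-1$. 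For such a pair, $F=\conv(\{\mathbf{v}_P\}\cup\sigma)$ is a $d$-simplex whose facet opposite $\mathbf{v}_P$ is exactly $\sigma$, so in particular $\mathbf{v}_P\notin\aff(\sigma)$ and $\mathbf{v}_P$ lies strictly on the interior side of $\aff(\sigma)$.

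Next I would build the partition. By Corollary~\ref{cor:part} the complex $\mathcal{C}_P$ is partitionable, and I would run the construction from the proof of Theorem~\ref{thm:part} with the generic point $\mathbf{x}$ chosen in the interior of $P$ and sufficiently close to $\mathbf{v}_P$. The crucial claim, which is the main obstacle, is that this choice forces $\mathbf{v}_P\notin G_F$ for every $d$-simplex $F$. Indeed $\mathbf{v}_P\in G_F$ holds precisely when the facet of $F$ opposite $\mathbf{v}_P$, namely $\sigma$, is visible from $\mathbf{x}$; but since $\mathbf{v}_P$ lies strictly on the interior side of $\aff(\sigma)$, so does every point in a small enough neighborhood of $\mathbf{v}_P$, simultaneously for the finitely many simplices involved. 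Thus for $\mathbf{x}$ near $\mathbf{v}_P$ no apex-opposite facet is visible, and $\mathbf{v}_P\notin G_F$ for all $F$. (One could equally take $\mathbf{x}$ to be any generic interior point, using that every such $\sigma$ lies in a facet of $P$ not containing $\mathbf{v}_P$, so that $\aff(\sigma)$ is a facet-supporting hyperplane of $P$.)

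With this partition in hand I would transfer it to $L$. Since $\mathbf{v}_P\notin G_F$ we have $G_F\subseteq\sigma=F\setminus\{\mathbf{v}_P\}$, so for each $(d-1)$-simplex $\sigma$ of $L$ I set $G'_\sigma:=G_F$ and consider the intervals $[G'_\sigma,\sigma]$. To check these partition $L$, take any face $H\in L$; it lies in a unique interval $[G_F,F]$ of $\mathcal{C}_P$, and because $\mathbf{v}_P\notin H$ we get $H\subseteq F\setminus\{\mathbf{v}_P\}=\sigma$ together with $G_F\subseteq H$, so $H\in[G'_\sigma,\sigma]$. Disjointness is inherited from the partition of $\mathcal{C}_P$ through the bijection $\sigma\leftrightarrow F$. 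This establishes that $L$ is partitionable.

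Finally I would read off the $h$-vectors using Theorem~\ref{thm:hvec}. Because $G_F$ omits $\mathbf{v}_P$ while $F$ has $d+1$ vertices, we have $|G_F|\le d$ for every $d$-simplex $F$; hence no $F$ satisfies $|G_F|=d+1$, which gives $h_{d+1}(\mathcal{C}_P)=0$. For $i\in[d]$ the bijection $F\leftrightarrow\sigma$ preserves cardinalities, $|G_F|=|G'_\sigma|$, so $h_i(\mathcal{C}_P)=\left|\{F:|G_F|=i\}\right|=\left|\{\sigma:|G'_\sigma|=i\}\right|=h_i(L)$, which is exactly the asserted equality. As a consistency check I would note that the same conclusion follows from the cone identity $f_k(\mathcal{C}_P)=f_k(L)+f_{k-1}(L)$, which yields $\sum_k h_k(\mathcal{C}_P)\,t^k=\sum_k h_k(L)\,t^k$; I would nevertheless keep the partition-based argument as the main line, since it delivers the partitionability of $L$ and the $h$-vector identities at once.
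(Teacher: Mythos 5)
Your proof is correct, and its skeleton coincides with the paper's: exploit the cone structure to get the bijection $\sigma \mapsto \conv(\{\mathbf{v}_P\}\cup\sigma)$ between $(d-1)$-simplices of the link and $d$-simplices of $\mathcal{C}_P$, transfer a partition of $\mathcal{C}_P$ to the link through this bijection, and read off both partitionability and the $h$-vector identities from Theorem \ref{thm:hvec} (your derivation of $h_{d+1}(\mathcal{C}_P)=0$ from $|G_F|\le d$ is also the right one). Where you genuinely diverge is in establishing the key fact $\mathbf{v}_P\notin G_F$: you engineer a partition with this property by re-running the construction of Theorem \ref{thm:part} with the generic point $\mathbf{x}$ placed near $\mathbf{v}_P$, so that no apex-opposite facet is visible; this geometric argument is sound (your parenthetical variant, that any generic interior point works because apex-opposite facets lie in facet-supporting hyperplanes of $P$, is also correct). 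The paper instead notes that this fact holds for \emph{every} partition of $\mathcal{C}_P$, by pure combinatorics: since every $d$-simplex contains $\mathbf{v}_P$, the facet $\sigma$ of $F$ opposite $\mathbf{v}_P$ lies in no $d$-simplex other than $F$, so the only interval that can cover the face $\sigma$ is $[G_F,F]$, forcing $G_F\subseteq\sigma$. The paper's observation buys simplicity and generality: one can cite Corollary \ref{cor:part} as a black box, with no control over how the partition arose and no visibility analysis; your route costs a careful choice of $\mathbf{x}$ (and re-entry into the proof of Theorem \ref{thm:part}) but makes the geometry explicit. Your closing cone identity $f_k(\mathcal{C}_P)=f_k(L)+f_{k-1}(L)$ is a valid independent check of the $h$-vector claims, though, as you say, it cannot replace the partition argument since the lemma also asserts partitionability of the link.
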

\begin{proof}
We will use $\alpha _1^d, \alpha _2^d, \dots \alpha _s ^d$ for the $d$-simplices of $\mathcal{C}_P$. Since $\mathcal{C}_P$ is a $V(P)$-pointed triangulation, for each $i$ there is an $\alpha _i ^{d-1}\in \link (\mathbf{v_P},\mathcal{C}_P)$ with $\alpha _i ^d = \conv (\{\mathbf{v}_P \} \cup \alpha _ i ^{d-1})$. 

Suppose that the collection $[G_{\alpha _i^d} , \alpha _i^d]$ is a partition of $\mathcal{C}_P$. Since $\alpha _i ^{d-1}$ is in only one $d$-simplex of $\mathcal{C}_P$, namely $\alpha _i ^{d}$, it is clear that $G_{\alpha _i^d}\subset \alpha _i ^{d-1}$ for each $i$. We claim that the collection of intervals $[G_{\alpha _i^d} , \alpha _i^{d-1}]$ is a partition of $\link(\mathbf{v_P},\mathcal{C}_P)$. Disjointness of the intervals follows from the disjointness of the partition of $\mathcal{C}_P$ since $[G_{\alpha _i^d} , \alpha _i^{d-1}]\subseteq [G_{\alpha _i^d} , \alpha _i^d]$ for each $i$. To see that the union covers the link, we consider $H\in \link(\mathbf{v_P},\mathcal{C}_P)$, but then $H\in \mathcal{C}_P$ with $\mathbf{v}_P\notin H$. Because $H\in \mathcal{C}_P$, there is an $i$ with $H\in [G_{\alpha _i^d} , \alpha _i^d]$; however, since $\mathbf{v}_P\notin H$, $H\in \alpha _i ^{d-1}$ and thus $H\in [G_{\alpha _i^d} , \alpha _i^{d-1}]$.

By Theorem \ref{thm:hvec}, for $i\in [d]$, $h_i(\mathcal{C})=h_i(\link(\mathbf{v_P},\mathcal{C}_P))$. In addition, since the $h$-vector for $\link(\mathbf{v_P},\mathcal{C}_P)$ does not have an $h_{d+1}$ component, $h_{d+1}(\mathcal{C}_P)=0$.
\end{proof}

Notice that the Euler characteristic of an $l$-dimensional polyhedral complex $\mathcal{C}$ can be found from the $f$-vector by $\chi (\mathcal{C} ) = \sum _{i=0} ^l (-1)^j f_j(\mathcal{C} )$. We can use this formula to show that, for a pointed triangulation $\mathcal{C}_P$ of a $d$-dimensional polytope $P$, $h_d (\mathcal{C}_P)=0$. Recall that we have already seen in Lemma \ref{lem:eq} that $h_{d+1} (\mathcal{C}_P)=0$.

\begin{lemma}\label{lem:hd}
For any $d$-dimensional polytope $P$ with a pointed triangulation $\mathcal{C}_P$, $h_d (\mathcal{C}_P)=0$.
\end{lemma}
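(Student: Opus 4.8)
The plan is to reduce the statement to the vanishing of the top entry of the $h$-vector of the link of the apex, and then to read that entry off as an Euler characteristic. First I would apply Lemma \ref{lem:eq} with $i=d$, which gives $h_d(\mathcal{C}_P)=h_d(\link(\mathbf{v}_P,\mathcal{C}_P))$. Writing $L:=\link(\mathbf{v}_P,\mathcal{C}_P)$, it therefore suffices to prove that the top component $h_d(L)$ of the $h$-vector of the $(d-1)$-dimensional complex $L$ is zero.

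Next I would compute $h_d(L)$ directly from Definition \ref{def:hvec}, using the standard fact that the top entry of the $h$-vector of a simplicial complex equals, up to sign, its reduced Euler characteristic. Concretely, at the top index the binomial coefficient in the defining formula is $1$, so the sum collapses to the plain alternating sum $h_d(L)=\sum_{i=0}^{d}(-1)^{d-i}f_{i-1}(L)$. Reindexing by $j=i-1$ and pulling out the sign $(-1)^{d-1}$, this is exactly $(-1)^{d-1}$ times $\sum_{j=-1}^{d-1}(-1)^{j}f_{j}(L)$. Since $L$ is $(d-1)$-dimensional, the Euler-characteristic formula recalled above gives $\chi(L)=\sum_{j=0}^{d-1}(-1)^{j}f_{j}(L)$, and together with the convention $f_{-1}(L)=1$ the latter sum equals $\chi(L)-1$. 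Hence $h_d(L)=(-1)^{d-1}\bigl(\chi(L)-1\bigr)$, and the whole problem comes down to showing $\chi(L)=1$.

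The geometric input that forces $\chi(L)=1$ is that $L$ is a triangulated $(d-1)$-ball and is therefore contractible. To see this I would use the description of the pointed triangulation from the proof of Theorem \ref{thm:ptri}: every $d$-simplex of $\mathcal{C}_P$ is a pyramid $\conv(\{\mathbf{v}_P\}\cup\tau)$ with apex $\mathbf{v}_P$ over a $(d-1)$-simplex $\tau$ lying in the induced triangulation of some facet $F$ of $P$ with $\mathbf{v}_P\notin F$. Consequently $L$ is precisely the induced triangulation of the union of the facets of $P$ that do not contain $\mathbf{v}_P$; this union is the portion of $\partial P$ opposite $\mathbf{v}_P$, a $(d-1)$-ball. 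Being contractible, $\chi(L)=1$, so $h_d(L)=0$ and therefore $h_d(\mathcal{C}_P)=0$.

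I expect the main obstacle to be precisely the claim that $L$ is a ball, equivalently that $\chi(L)=1$. One cannot obtain this from $\chi(\mathcal{C}_P)=1$ alone, since $\mathcal{C}_P$ is a cone over $L$ with apex $\mathbf{v}_P$ and a cone is contractible irrespective of the topology of $L$; thus the pointedness hypothesis is genuinely needed. The careful point is to justify that the facets of $P$ not containing $\mathbf{v}_P$ assemble into a ball rather than into some other complex with a coincidentally different Euler characteristic, which I would argue through the visibility description of the facets opposite $\mathbf{v}_P$ already implicit in Theorem \ref{thm:ptri}.
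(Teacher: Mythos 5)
Your proposal is correct and takes essentially the same route as the paper's proof: reduce to the link of the apex via Lemma \ref{lem:eq}, identify $h_d(\link(\mathbf{v}_P,\mathcal{C}_P))$ with (a sign times) the reduced Euler characteristic, and conclude from contractibility that this vanishes. You in fact supply more detail than the paper, which simply asserts that the link is contractible with $\chi=1$, whereas you justify this by identifying the link with the induced triangulation of the union of the facets of $P$ not containing $\mathbf{v}_P$, a $(d-1)$-ball.
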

\begin{proof}
Assume that $\mathcal{C}_P$ is the $V(P)$-pointed triangulation of $P$. By Theorem \ref{lem:eq} it is enough to show that $h_d (\link(\mathbf{v_P},\mathcal{C}_P) )=0$. By the definition of the $h$-vector $$h_d= \sum _{i=0}^d (-1)^{k-i} \binom{d-i}{0} f_{i-1}=\sum _{i=0}^d (-1)^{k-i} f_{i-1},$$ where $h_d=h_d(\link(\mathbf{v_P},\mathcal{C}_P) )$ and $f_{i-1}(\link(\mathbf{v_P},\mathcal{C}_P) )$. Also since $\link(\mathbf{v_P},\mathcal{C}_P)$ is contractable and has Euler characteristic 1, $1=\sum _{i=0}^{d-1} (-1)^i  f_i$. Therefore, 
$$ h_d=\sum _{i=0}^d (-1)^{d-i} f_{i-1}=(-1)^d \left[ f_{-1} - \sum_{i=0}^{d-1} (-1)^i f_i \right]=(-1)^d ( 1 - 1 )=0.$$
\end{proof}

\section{Polytope Numbers}
Below is a formal definition of the polytope number.

\begin{definition}
\label{def:poly}
For a $d$-polytope $P$, we define a sequence of polytope numbers $P(n)$ and interior polytope numbers $P(n)^\#$ by double induction on $d$ and $n$. First assume the set of apexes $V(P)$ has been choosen. When $d=0$, we define $P(0)=P(0)^\#=0$ and $P(n)=P(n)^\#=1$ for $n\geq 1$.

Now we use the induction with $d>0$ and assume that for each $F\in \mathcal{F}_k (P)$ with $k<d$, the sequences $F(n)$ and $F(n)^\#$ are defined. We start the sequences with $P(0)=P(0)^\#=0$, $P(1)=1$, and $P(1)^\#=0$. Using induction on $n$, we complete the sequences with the following formula for $n\geq 2$:
$$P(n)=P(n-1)+\sum _{\mathbf{v}_P\notin F \in \mathcal{F}(P)} F(n)^\# \textrm{ and }P(n)^\#=P(n)-\sum _{F\in \mathcal{F}(P)\setminus \{P\} } F(n)^\# .$$
\end{definition}
Notice that by rearranging the last summation we get the following formula for $n\geq 2$: $$P(n)=\sum_{F\in \mathcal{F}(P) } F(n)^\#.$$
If the polytope is not vertex transitive, the polytope number sequences will depend on the choice of vertices $V(P)$ and so should be called the $V(P)$-polytope numbers.

We need to look especially at the simplex number sequences. Just as we used $\alpha $ to stand for a simplex, we will use it to represent the simplex number sequences. So $\alpha ^d (n)$ will represent the number sequence for the $d$-dimensional simplex.

Recall from H. K. Kim's work \cite{hkk:pn} the following formulas:
$$\alpha^d (n) = \sum _{i=1} ^n \alpha ^{d-1}(i)=\binom{d+n-1}{d}$$
$$\alpha^d (n)-\alpha^d (n-1)=\alpha^{d-1} (n).$$
The second formula can be rewritten $\alpha^d(n) - \alpha^{d-1} (n) = \alpha ^d(n-1)$. Also note that the facets of a $d$-dimensional simplex are $(d-1)$-dimensional simplices and so this formula can be thought of as cutting a facet off of a simplex. By repeating this process, we get the following result formula, which we will refer to as the \emph{facet-cut} formula:
$$\alpha ^d (n) - \sum_{i=0}^{k-1} \alpha ^{d-1} (n-i)= \alpha^d (n-k).$$
The facet-cut formula holds for any $k\leq n$, but it has geometric meaning only when $k\leq d+1$ since a simplex of dimension $d$ has $d+1$ facets to cut. In addition if all of the facets are cut, then the result is the interior of the simplex, which means that $\alpha^d (n) ^\# = \alpha ^d (n-d-1).$

\begin{lemma}
\label{lem:inttri}
Let $P$ be a polytope with a $V(P)$-pointed triangulation $\mathcal{C}_P$. We will let $\mathcal{C}_F$ be the corresponding pointed triangulation of a face $F\in \mathcal{F}(P)$. Using $\alpha ^i _\beta $ as an $i$-dimensional simplex in the triangulation, we have
$$ P(n)=\sum_{\alpha^i _\beta \in \mathcal{C}_P } \alpha ^i (n)^\#.$$
\end{lemma}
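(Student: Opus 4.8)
The plan is to prove, by induction on the dimension $d=\dim P$, a sharper pair of identities: the stated one
$$P(n)=\sum_{\alpha^i_\beta\in\mathcal{C}_P}\alpha^i(n)^\#$$
together with its interior analogue
$$P(n)^\#=\sum_{\substack{\alpha^i_\beta\in\mathcal{C}_P\\ \alpha^i_\beta\subseteq\mathrm{relint}(P)}}\alpha^i(n)^\#,$$
the second sum ranging over the simplices of $\mathcal{C}_P$ not contained in any proper face of $P$. Call these (A) and (B); I carry both because the recursive definition of $F(n)^\#$ forces the interior statement into the induction over faces. The base case $d=0$ is immediate, since $\mathcal{C}_P$ is the single vertex and $\alpha^0(n)^\#=1=P(n)$ for $n\ge 1$.

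For the inductive step I would exploit two structural features of the $V(P)$-pointed triangulation. First, a \emph{face partition}: each simplex of $\mathcal{C}_P$ lies in the relative interior of exactly one face $F\in\mathcal{F}(P)$, and the simplices landing in $\mathrm{relint}(F)$ are precisely the interior simplices of the compatible triangulation $\mathcal{C}_F$, a compatibility built into the construction of Theorem \ref{thm:ptri}, where every apex is the minimizer of one fixed linear function. Hence $\sum_{\alpha^i_\beta\in\mathcal{C}_P}\alpha^i(n)^\#$ equals the sum over $F\in\mathcal{F}(P)$ of the interior sums for $\mathcal{C}_F$, so by the induction hypothesis (B) on the proper faces and the rearranged definition $P(n)=\sum_{F\in\mathcal{F}(P)}F(n)^\#$, statements (A) and (B) for $P$ become \emph{equivalent}. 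Second, an \emph{apex--cone split}: writing $\mathbf{v}_P$ for the apex, the simplices containing $\mathbf{v}_P$ biject with $\link(\mathbf{v}_P,\mathcal{C}_P)\cup\{\emptyset\}$ via $\sigma\mapsto\sigma\setminus\{\mathbf{v}_P\}$, raising dimension by one, while the simplices avoiding $\mathbf{v}_P$ are exactly $\link(\mathbf{v}_P,\mathcal{C}_P)$, which triangulates the union of the faces of $P$ not containing $\mathbf{v}_P$ and restricts to $\mathcal{C}_F$ on each.

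To break the equivalence of (A) and (B) I would run an inner induction on $n$ driven by the polytope recursion $P(n)-P(n-1)=\sum_{\mathbf{v}_P\notin F}F(n)^\#$ of Definition \ref{def:poly}. Applying (B) to the faces $F\not\ni\mathbf{v}_P$ and the link description rewrites its right-hand side as $\sum_{\tau\in\link(\mathbf{v}_P,\mathcal{C}_P)}\alpha^{\dim\tau}(n)^\#$. On the simplex side the apex--cone split gives $S(P,n):=\sum_{\sigma}\alpha^{\dim\sigma}(n)^\#=\alpha^0(n)^\#+\sum_{\tau}\bigl(\alpha^{\dim\tau+1}(n)^\#+\alpha^{\dim\tau}(n)^\#\bigr)$, and a short computation with the simplex recursion $\alpha^{k}(m)-\alpha^{k}(m-1)=\alpha^{k-1}(m)$ and the facet-cut identity $\alpha^k(m)^\#=\alpha^k(m-k-1)$ reduces the required equality $S(P,n)-S(P,n-1)=\sum_\tau\alpha^{\dim\tau}(n)^\#$ to the single-simplex fact $\alpha^{k}(n)^\#=\alpha^{k}(n-1)^\#+\alpha^{k-1}(n-1)^\#$. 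Comparing the two recursions then yields (A) for $P$, and (B) for $P$ drops out of the face-partition equivalence.

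The step I expect to be the real obstacle is the bookkeeping at small $n$ and in dimension zero. The single-simplex identity above rests on $\alpha^k(m)^\#=\alpha^k(m-k-1)$, which is genuinely valid only for $k\ge 1$: at $k=0$ the convention $\alpha^0(m)^\#=1$ (a point is its own interior) disagrees with $\alpha^0(m-1)$ at $m=1$, and this makes the reduction fail at $n=2$, $k=1$. I would therefore state both identities for $n\ge 2$ — the range in which the rearranged face formula is itself asserted — verify the inner base case directly through $S(P,2)=|\Ver(P)|=P(2)$ (each positive-dimensional simplex contributes $\alpha^i(2)^\#=0$ and each vertex contributes $1$), and only then step up to $n\ge 3$, where the single-simplex identity holds for every $k\ge 0$. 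Pinning down these conventions and the link/compatibility geometry precisely, rather than the algebra, is where the care will be needed.
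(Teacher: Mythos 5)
Your proposal is correct and takes essentially the same route as the paper's proof: an outer induction on dimension carrying the interior identity along with the main one, the partition of simplices by the unique face of $P$ to whose relative interior they belong, and the apex--cone bijection between simplices avoiding $\mathbf{v}_P$ (the link) and simplices containing it; the only difference is that you resolve the dependence on $n$ by an inner induction on first differences via the Pascal-type identity $\alpha^{k}(n)^\#=\alpha^{k}(n-1)^\#+\alpha^{k-1}(n-1)^\#$, whereas the paper unrolls the recursion as $P(n)=1+\sum_{j=2}^{n}\sum_{\mathbf{v}_P\notin F}F(j)^\#$ and uses the equivalent summed identity $\alpha^{i+1}(n)^\#=\sum_{j=2}^{n-1}\alpha^{i}(j)^\#$. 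Your caution about small $n$ is warranted and is in fact a point the paper leaves implicit: at $n=1$ the right-hand side of the lemma equals $|\Ver(P)|$ rather than $P(1)=1$, so both your argument and the paper's really establish the identity for $n\geq 2$.
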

\begin{proof}
We will use induction on the dimension of $P$. Clearly this is true for a polytope of dimension 1 since every polytope of dimension 1 is a simplex. Now assume that the statement is true for every polytope of dimension less than $d$ where $P$ is a $d$-dimensional polytope. In particular the statement holds for every face of $F\in \mathcal{F}(P)\setminus \{P\}$. Let $F\in \mathcal{F}(P)\setminus \{P\}$, then 
$$ F(n)=\sum_{\alpha^i _\beta \in \mathcal{C}_F } \alpha ^i (n)^\#.$$
Also by induction,
$$ F(n)^\#=\sum_{\alpha^i _\beta \in \mathcal{C}_F\setminus \mathcal{C}_{\partial F}} \alpha ^i (n)^\#.$$
Since $P(n)=P(n-1)+\sum _{\mathbf{v}_P \notin F \in \mathcal{F}(P) } F(n)^\#$, we see that
$$P(n)=1+ \sum _{j=2}^n \sum _{\mathbf{v}_P \notin F \in \mathcal{F}(P) } F(j)^\#.$$
But this shows that 
$$P(n)=1+ \sum _{j=2}^n \sum _{\mathbf{v}_P \notin F \in \mathcal{F}(P) } \sum_{\alpha^i _\beta \in \mathcal{C}_F\setminus \mathcal{C}_{\partial F}} \alpha ^i (j)^\#.$$
Since every simplex is interior to exactly one face of $P$, we can write the expression as follows: 
$$P(n)=1+ \sum _{j=2}^n \sum _{\mathbf{v}_P \notin \alpha^i_\beta  \in \mathcal{C}_P } \alpha^i (j)^\#.$$
Notice that as in the proof of Theorem \ref{thm:ptri}, each $\alpha ^i _\beta \in \mathcal{C}_P$ with $\mathbf{v}_P \notin \alpha ^i _\beta$ corresponds uniquely to an $\alpha ^{i+1} _\beta =\conv (\{\mathbf{v}_P\} \cup \alpha^i_\beta )$ where $\mathbf{v}_P \in \alpha ^{i+1} _\beta\in \mathcal{C}_P$. Also notice that $\alpha ^{i+1} (n)^\# = \sum _{j=2}^{n-1} \alpha ^i (j) ^\# $ for $i\geq 1$ (since $\alpha ^i (j)^\#=0$ for $j<2$). Putting these sums together, we see that
\begin{align*}
P(n)&=1+ \sum _{j=2}^n \sum _{\mathbf{v}_P \notin \alpha^i_\beta  \in \mathcal{C}_P } \alpha^i (j)^\#\\
&=1+ \sum _{\mathbf{v}_P \notin \alpha^i_\beta  \in \mathcal{C}_P } \sum _{j=2}^n \alpha^i (j)^\#\\
&=1+ \sum _{\mathbf{v}_P \notin \alpha^i_\beta  \in \mathcal{C}_P }  \alpha^i (n)^\# +\sum _{\mathbf{v}_P \in \alpha^{i+1}_\beta  \in \mathcal{C}_P }  \alpha^{i+1} (n)^\#\\
&= \sum _{\alpha^i _\beta \in \mathcal{C}_P } \alpha_i (n)^\#
\end{align*} 
where in the last summation the 1 becomes the $\alpha^0 (n)^\#$ arising from the vertex $\mathbf{v}_P$.
\end{proof}

\begin{theorem}
\label{thm:numseq}
Let $P$ be a $d$-polytope with a $V(P)$-pointed triangulation $\mathcal{C}_P$. In addition let $h_j$ for $j\in [d]$ be the $h$-vector components for this polytope. Then 
$$P(n)=\sum _{j=0}^{d-1} h_j  \alpha^d (n-j).$$
\end{theorem}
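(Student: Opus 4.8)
The plan is to begin from Lemma~\ref{lem:inttri}, which already expresses $P(n)$ as a sum of interior simplex numbers over all simplices of the triangulation, and then to reorganize this sum using the partition guaranteed by Corollary~\ref{cor:part}. Recording the interior identity $\alpha^i(n)^\# = \alpha^i(n-i-1)$ noted just before Lemma~\ref{lem:inttri}, that lemma gives
$$P(n) = \sum_{\alpha^i_\beta \in \mathcal{C}_P} \alpha^i(n)^\#.$$
Fixing a partition $\{[G_F,F] : F\in\mathcal{F}_d(\mathcal{C}_P)\}$ of $\mathcal{C}_P$, and using that every simplex lies in exactly one interval, I would regroup this as
$$P(n) = \sum_{F\in\mathcal{F}_d(\mathcal{C}_P)}\ \sum_{G\in[G_F,F]} \alpha^{\dim G}(n)^\#.$$

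The heart of the argument is an interval identity: for a single $d$-simplex $F$ and any face $G_F\subseteq F$ with $|G_F|=m$ vertices,
$$\sum_{G\,:\,G_F\subseteq G\subseteq F} \alpha^{\dim G}(n)^\# = \alpha^d(n-m).$$
I would prove this by induction on the number $r=(d+1)-m$ of vertices of $F$ not lying in $G_F$. When $r=0$ the only face is $F$ itself and the left side is $\alpha^d(n)^\# = \alpha^d(n-d-1) = \alpha^d(n-m)$. For the inductive step, pick a vertex $\mathbf{w}\in\Ver(F)\setminus G_F$ and split the faces $G$ in the interval according to whether they contain $\mathbf{w}$. The faces containing $\mathbf{w}$ form the interval $[G_F\cup\{\mathbf{w}\},F]$ (parameter $r-1$), contributing $\alpha^d(n-m-1)$; the faces avoiding $\mathbf{w}$ are exactly those faces of the facet $F'=\conv(\Ver(F)\setminus\{\mathbf{w}\})$ containing $G_F$, a $(d-1)$-simplex again with parameter $r-1$, contributing $\alpha^{d-1}(n-m)$. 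Summing and applying the facet-cut relation $\alpha^d(\ell)-\alpha^{d-1}(\ell)=\alpha^d(\ell-1)$ with $\ell=n-m$ gives $\alpha^d(n-m-1)+\alpha^{d-1}(n-m)=\alpha^d(n-m)$, closing the induction.

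With the interval identity in hand, the sum over each $d$-simplex collapses to $\alpha^d(n-|G_F|)$, so
$$P(n) = \sum_{F\in\mathcal{F}_d(\mathcal{C}_P)} \alpha^d(n-|G_F|) = \sum_{j\geq 0} \left|\{F : |G_F|=j\}\right|\,\alpha^d(n-j).$$
By Theorem~\ref{thm:hvec} the coefficient of $\alpha^d(n-j)$ is exactly $h_j(\mathcal{C}_P)$. Finally, since $|G_F|\leq d+1$ always, while Lemma~\ref{lem:hd} gives $h_d(\mathcal{C}_P)=0$ and Lemma~\ref{lem:eq} gives $h_{d+1}(\mathcal{C}_P)=0$, no $d$-simplex has $|G_F|\in\{d,d+1\}$, and the sum truncates to $0\leq j\leq d-1$, yielding $P(n)=\sum_{j=0}^{d-1} h_j\,\alpha^d(n-j)$.

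I expect the main obstacle to be the interval identity, and in particular arranging its induction so that both pieces of the split genuinely reduce the single parameter $r$; the bookkeeping of the face dimension $\dim G$ against the ambient dimension $d$, together with the repeated use of the one facet-cut relation, is where care is required. The remaining steps---regrouping by the partition and reading the coefficients off as $h$-vector entries via Theorem~\ref{thm:hvec}, then truncating by Lemmas~\ref{lem:hd} and~\ref{lem:eq}---are essentially formal.
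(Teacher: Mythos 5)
Your proposal is correct, and it takes a genuinely different route from the paper's. The paper also starts from Lemma~\ref{lem:inttri}, but then groups the sum by dimension to get $P(n)=\sum_{i=0}^d f_i\,\alpha^i(n)^\#$, applies the algebraic inversion $f_i=\sum_{j} h_j\binom{d+1-j}{i+1-j}$ of the $h$-vector definition, swaps the order of summation, and evaluates the inner sum $\sum_{i=j-1}^{d}\binom{d+1-j}{i+1-j}\alpha^i(n)^\#=\alpha^d(n-j)$ by Vandermonde's convolution; the partition of Corollary~\ref{cor:part} enters only indirectly, through Lemmas~\ref{lem:eq} and~\ref{lem:hd}, to kill $h_d$ and $h_{d+1}$. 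You instead make the partition the organizing principle: each interval $[G_F,F]$ collapses, via your inductive interval identity and the facet-cut formula, to the single term $\alpha^d(n-|G_F|)$, and Theorem~\ref{thm:hvec} converts the count of intervals with $|G_F|=j$ into $h_j$. Your interval identity and its induction are sound---both halves of the split have parameter $r-1$, one in dimension $d$ and one in dimension $d-1$, so the inductive hypothesis must be quantified over all dimensions simultaneously, which your formulation permits. What your route buys is a combinatorial explanation of why the $h$-vector appears (one term $\alpha^d(n-j)$ per partition interval, replacing binomial algebra with telescoping), and it runs exactly parallel to the paper's own proof of the interior statement, Theorem~\ref{thm:int}, which is likewise organized by intervals $[D_F,F]$ but still counts faces by dimension inside each interval and reuses the Vandermonde computation; what the paper's route buys is brevity, given the standard $f$-to-$h$ inversion. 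One bookkeeping point you should make explicit: the interval of the $d$-simplex containing the generic point $\mathbf{x}$ is $[\emptyset,F_0]$, which contains the empty face, whereas Lemma~\ref{lem:inttri} sums only over nonempty simplices; so either restrict all interval sums to nonempty faces (your base case and inductive step go through verbatim) or adopt the convention $\alpha^{-1}(n)^\#=0$.
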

\begin{proof}
By Lemma \ref{lem:inttri} we have $P(n)=\sum _{\alpha^i _\beta \in \mathcal{C}_P } \alpha_i (n)^\#$. Recall that the $f$-vector component, $f_i$, gives the number of simplices of dimension $i$ in the simplicial complex $\mathcal{C}_P$. Thus $P(n)=\sum _{i=0}^d f_i \alpha^i (n)^\#$.

Recall $f_i= \sum _{j=0} ^{i+1} h_j \binom{d+1-j}{i+1-j}$. See \cite[Section 8.3]{gz:lop}. 

\begin{align*}
P(n)&=\sum _{i=0}^d f_i \alpha^i (n)^\#\\
&=\sum _{i=0}^d \sum _{j=0} ^{i+1} h_j \binom{d+1-j}{i+1-j} \alpha^i (n)^\#\\
&=\sum _{j=0}^{d+1}  h_j \left[ \sum _{i=j-1} ^{d} \binom{d+1-j}{i+1-j} \alpha^i (n)^\# \right].
\end{align*} 
Recall from Lemma \ref{lem:hd} that $h_d=0=h_{d+1}$. So to finish the proof, we need to show that $$\sum _{i=j-1} ^{d} \binom{d+1-j}{i+1-j} \alpha^i (n)^\# = \alpha^d (n-j).$$ This result follows from an identity called Generalized Kim's Identity \cite[Lemma 2]{j:rspn}. For completeness we will show the result here.
\begin{align*}
\sum _{i=j-1} ^{d} \binom{d+1-j}{i+1-j} \alpha^i (n)^\# &=\sum _{i=0} ^{d-j+1} \binom{d+1-j}{i} \alpha^{j-1+i} (n)^\#\\
&= \sum _{i=0} ^{d-j+1} \binom{d+1-j}{i} \binom{n-2}{i+j-1}\\
&= \sum _{i=0} ^{d-j+1} \binom{d+1-j}{i} \binom{n-2}{n-1-i-j}\\
&= \binom{d-j+n-1}{n-1-j} \textrm{  By Vandermonde's convolution}\\
&= \binom{d-j+n-1}{d}=\alpha ^d (n-j).
\end{align*}
\end{proof}
\section{Interior Polytope Numbers}

Let $P$ be a polytope with a $V(P)$ pointed triangulation $\mathcal{C}_P$. The complex $\mathcal{I}_P$ is the collection of simplices $F\in \mathcal{C}_P$ such that $F\notin G$ for all $G\in \mathcal{F}(P)\setminus \{P\}$. Notice that this complex is not a polytopal complex and that $\mathcal{C}_P=\mathcal{I}_P \uplus \mathcal{C}_{\partial P}$.

We will spend this section developing a theory for the complex $\mathcal{I}_P$ similar to the theory used in the two previous sections for the complex $\mathcal{C}_P$. We will then use this theory to prove Theorem \ref{Thm:Main}. Throughout the section we will assume that $P$ is a convex polytope with a $V(P)$-pointed triangulation $\mathcal{C}_P$. We will also define $\mathcal{I}_P$ as above. 

Let $\mathbf{x}$ be in general position with respect to $\mathcal{C}_P$ as in the proof of Theorem \ref{thm:part}. For each $F \in \mathcal{F}_d (\mathcal{C}_P)$, Let $R_F$ be the set of facets of $F$ visible from $\mathbf{x}$. Let $Q_F= \mathcal{F}_{d-1} (F)\setminus R_F$. Thus $Q_F$ is the set of facets of $F$ that are not visible from $\mathbf{x}$. Let $D_F$ be the set of vertices of $F$ that are opposite a facet in $Q_F$.

\begin{lemma}
\label{lem:intpart}
Let $P$ be a convex $d$-polytope with a $V(P)$-pointed triangulation $\mathcal{C}_P$. 
Define $\mathcal{I}_P$ as above and for each $F\in \mathcal{F}_d (\mathcal{C}_P)$ define $D_F$ as above. Then  
$$\biguplus _{F\in \mathcal{F}_d(\mathcal{C}_P)} [D_F, F] = \mathcal{I}_P.$$
\end{lemma}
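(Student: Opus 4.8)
The plan is to mirror the proof of Theorem~\ref{thm:part}, but to assign each simplex to the top-dimensional cell lying \emph{just beyond} it (in the direction away from $\mathbf{x}$) rather than just before it. The first step is a purely combinatorial reformulation of membership in the interval $[D_F,F]$. In a $d$-simplex $F$ each vertex is opposite a unique facet and vice versa, so for a face $G$ of $F$ the condition $D_F\subseteq G$ holds iff every vertex lying opposite a facet in $Q_F$ belongs to $G$, i.e. iff no facet of $F$ in $Q_F$ contains $G$. Equivalently, $G\in[D_F,F]$ iff $G$ is a face of $F$ all of whose containing facets are visible from $\mathbf{x}$ (lie in $R_F$). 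This is the exact complement of the condition $G\in[G_F,F]$ used in Theorem~\ref{thm:part}, where the facets containing $G$ were required to be invisible.

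Next I would translate visibility into the geometry of the ray from $\mathbf{x}$. A facet $\sigma$ of $F$ is visible from $\mathbf{x}$ precisely when $\mathbf{x}$ lies on the side of $\aff(\sigma)$ opposite the interior of $F$. Fix $G$ a face of $F$, a point $\mathbf{z}\in\mathrm{relint}(G)$, and set $\mathbf{u}=\mathbf{z}-\mathbf{x}$. A short computation with the supporting functionals of the facets of $F$ shows that $\mathbf{z}+\epsilon\mathbf{u}\in\mathrm{int}(F)$ for all small $\epsilon>0$ if and only if every facet of $F$ containing $G$ is visible from $\mathbf{x}$ (the facets containing $G$ are governed by visibility, while those not containing $G$ hold automatically since $\mathbf{z}$ is strictly interior to them). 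Combined with the first step this yields the key characterization: $G\in[D_F,F]$ iff $G$ is a face of $F$ and moving from $\mathbf{z}$ in the direction $\mathbf{u}$ away from $\mathbf{x}$ enters $\mathrm{int}(F)$.

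With this characterization I would conclude as follows. Take $\mathbf{x}$ generic and interior to $P$, as in Theorem~\ref{thm:part}. For $G\in\mathcal{C}_P$ choose $\mathbf{z}\in\mathrm{relint}(G)$. If $G\in\mathcal{I}_P$ then $\mathbf{z}\in\mathrm{int}(P)$, so $\mathbf{z}+\epsilon\mathbf{u}$ stays in $\mathrm{int}(P)$ for small $\epsilon$ and, using genericity of $\mathbf{x}$ so that the ray does not run inside a lower-dimensional flat, lands in the interior of a unique $d$-simplex $F$; one checks that $G$ is then a face of $F$, so $G\in[D_F,F]$, and the characterization forces this $F$ to be the unique top-dimensional simplex with $G\in[D_{F},F]$. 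If instead $G\notin\mathcal{I}_P$, then $\mathrm{relint}(G)\subseteq\partial P$, so $\mathbf{z}\in\partial P$ while $\mathbf{x}\in\mathrm{int}(P)$; by convexity the ray exits $P$ at $\mathbf{z}$, whence $\mathbf{z}+\epsilon\mathbf{u}\notin P$ and no $d$-simplex can contain it, so $G$ lies in no interval $[D_F,F]$. This simultaneously gives pairwise disjointness of the intervals and shows their union is exactly $\mathcal{I}_P$.

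The main obstacle is the genericity bookkeeping in the last step: I must ensure that for every $\mathbf{z}\in\mathrm{relint}(G)$ with $G\in\mathcal{I}_P$ the forward ray enters the interior of a \emph{single} top-dimensional simplex, which can fail only if $\mathbf{u}$ is parallel to a shared facet hyperplane, an event excluded by the same general-position hypothesis on $\mathbf{x}$ used in Theorem~\ref{thm:part} (that $\mathbf{x}$ avoids the affine hull of every lower-dimensional simplex of $\mathcal{C}_P$). The only other point needing care is the standard fact that, because $\mathcal{C}_P$ refines the face lattice of $P$, every simplex of $\mathcal{C}_P$ either lies in $\partial P$ or has its relative interior in $\mathrm{int}(P)$, so that $G\in\mathcal{I}_P$ is equivalent to $\mathrm{relint}(G)\subseteq\mathrm{int}(P)$.
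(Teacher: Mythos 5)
Your proposal is correct and follows essentially the same route as the paper's proof: both assign each interior face $G$ to the unique $d$-simplex that the ray from $\mathbf{x}$ enters just \emph{beyond} $\mathrm{relint}(G)$, using the correspondence between facets in $Q_F$ and vertices in $D_F$ to translate $D_F\subseteq G$ into the condition that every facet of $F$ containing $G$ is visible from $\mathbf{x}$. The only cosmetic difference is that you exclude boundary faces geometrically (the forward ray exits $P$), whereas the paper notes that each boundary facet lies in a single $d$-simplex and is not visible from $\mathbf{x}$; both arguments are fine.
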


\begin{proof}
Let $H\in \mathcal{C}_P\setminus \mathcal{C}_{\partial P }$. There is a unique $d$-simplex $F_0$ such that the ray from $\mathbf{x}$ through $\mathbf{y}$ intersects $F_0$ at points after $\mathbf{y}$ for all $\mathbf{y}$ in the interior of $H$. Each facet of $F_0$ either contains $H$ or is opposite a vertex contained in $H$. The facets of $F_0$ that contain $H$ are exactly those that are visible from $\mathbf{x}$. Thus $H\in [D_{F_0}, F_0]$.

Suppose that $H\in [D_F,F]$ for some $F\in \mathcal{F}_d (\mathcal{C}_P)$. Then $D_F\subseteq H$, which implies that $H$ is not contained in any facet that is not visible from $\mathbf{x}$. Thus if we take an interior point $\mathbf{y}$ of $H$ and move into any facet of $F$ containing $H$, the ray from $x$ will encounter $F$ after reaching that facet. This implies that $F$ is the $F_0$ from the previous paragraph. In this way we see that if $F,F' \in \mathcal{F}_d(\mathcal{C}_P)$ are not the same, $[D_F,F]\cap [D_{F'},F']=\emptyset$. Also each facet in $\mathcal{F}_{d-1}(\mathcal{C}_{\partial P})$ is only contained in one $d$-simplex of $\mathcal{C}_P$ and is not visible from $\mathbf{x}$ as a facet of that simplex. Thus for each $F\in \mathcal{F}_d (\mathcal{C}_P)$, $[D_F,F]\cap \mathcal{C}_{\partial P}=\emptyset $. 
\end{proof}

\begin{definition}
Let $P$ be a convex $d$-polytope with a $V(P)$-pointed triangulation $\mathcal{C}_P$. 
Define $\mathcal{I}_P$ as above and for each $F\in \mathcal{F}_d (\mathcal{C}_P)$ define $D_F$ as above. The $k$-vector of $\mathcal{I}_P$ is $\mathbf{k} (\mathcal{I}_P)=(k_0, k_1, \dots , k_{d+1} )$ where $k_i=\left| \left\{ F\in \mathcal{F}_d(\mathcal{C}_P) : |D_F|=i ) \right\}\right|$. The $e$-vector of $\mathcal{I}_P$ is $\mathbf{e}(\mathcal{I}_P)=(e_0, e_1, \dots , e_{d})$ where $e_i=\left| \left\{ F\in \mathcal{I}_P : \dim(F)=i \right\} \right|$.
\end{definition}

\begin{corollary}
\label{cor:hk}
Let $P$ be a convex $d$-polytope with a $V(P)$-pointed triangulation $\mathcal{C}_P$. Define $\mathcal{I}_P$ as above with the $k$ and $h$ vectors defined for $\mathcal{I}_P$ and $\mathcal{C}_P$, respectively.
Then $k_{i}=h_{d+1-i}$ for $i\in [d+1]$.
\end{corollary}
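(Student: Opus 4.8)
The plan is to exploit the complementary relationship between the sets $G_F$ used to define the $h$-vector in Theorem~\ref{thm:hvec} and the sets $D_F$ used to define the $k$-vector. Both constructions are built from the same generic point $\mathbf{x}$, and the key structural fact is that a $d$-simplex $F$ has exactly $d+1$ facets, each of which is either visible from $\mathbf{x}$ or not. Thus the facet set $\mathcal{F}_{d-1}(F)$ splits as the disjoint union $R_F \uplus Q_F$, so that $|R_F| + |Q_F| = d+1$.

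First I would record the bijection between vertices and facets of a simplex: each vertex of $F$ is opposite exactly one facet, and conversely. Under this correspondence, $G_F$ is precisely the set of vertices opposite the facets in $R_F$, so $|G_F| = |R_F|$, while $D_F$ is the set of vertices opposite the facets in $Q_F$, so $|D_F| = |Q_F|$. Combining these with the facet count above yields the central identity
$$|G_F| + |D_F| = |R_F| + |Q_F| = d+1 \qquad \text{for every } F \in \mathcal{F}_d(\mathcal{C}_P).$$
Hence $|D_F| = i$ holds if and only if $|G_F| = d+1-i$.

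The final step is a direct counting argument. By Theorem~\ref{thm:hvec}, applied to the partition $[G_F, F]$ arising from $\mathbf{x}$ (the same partition constructed in the proof of Theorem~\ref{thm:part}), we have $h_{d+1-i}(\mathcal{C}_P) = |\{F \in \mathcal{F}_d(\mathcal{C}_P) : |G_F| = d+1-i\}|$. By the equivalence established in the previous paragraph, the set being counted is identical to $\{F \in \mathcal{F}_d(\mathcal{C}_P) : |D_F| = i\}$, whose cardinality is $k_i$ by definition of the $k$-vector. Therefore $k_i = h_{d+1-i}$ for each $i \in [d+1]$.

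I do not anticipate a serious obstacle here, since the argument is essentially a reindexing once the complementarity is in place. The only point requiring care is that the $h$-vector is a well-defined invariant of $\mathcal{C}_P$ independent of the chosen partition; this is exactly what Theorem~\ref{thm:hvec} guarantees, so I am free to compute it using the specific $\mathbf{x}$-partition that also defines $D_F$. This ensures that $G_F$ and $D_F$ refer to facets of the \emph{same} simplex with respect to the \emph{same} viewpoint $\mathbf{x}$, which is what makes the disjoint splitting $R_F \uplus Q_F$ valid.
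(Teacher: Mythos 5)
Your proof is correct and follows essentially the same route as the paper: the paper's own argument is precisely the observation that $|D_F|+|G_F|=d+1$ for each $F\in\mathcal{F}_d(\mathcal{C}_P)$, followed by the counting via Theorem~\ref{thm:hvec}. Your write-up simply makes explicit the details the paper leaves implicit (the visible/non-visible split $R_F\uplus Q_F$ of the $d+1$ facets, the vertex--facet opposition in a simplex, and the fact that Theorem~\ref{thm:hvec} lets you compute the $h$-vector from the particular partition induced by $\mathbf{x}$).
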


\begin{proof}
Using the definitions above, $|D_F|+|G_F|=d+1$ for each $F\in \mathcal{F}_d(\mathcal{C}_P)$. So by Theorem \ref{thm:hvec}, $k_{d+1-i}=h_{i}$ for all $i\in [d+1]$. Equvalently $k_{i}=h_{d+1-i}$ for $i\in [d+1]$.
\end{proof}

Recall
$$ P(n)^\#=\sum_{\alpha^i _\beta \in \mathcal{I}_P} \alpha ^i (n)^\#$$
and $$ P(n)^\#=P(n) - \sum _{\alpha^i _\beta  \in \mathcal{C}_{\partial P}} \alpha ^i (n)^\#.$$

\begin{theorem}
\label{thm:int}
Let $P$ be a polytope with a $V(P)$-pointed triangulation $\mathcal{C}_P$. Let $k_j$ be the components of the $k$-vector for this triangulation, then 
$$P(n)^\# = \sum _{j=0}^{d+1} k_j \alpha^d (n-j).$$
\end{theorem}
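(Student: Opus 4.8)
The plan is to mirror the proof of Theorem \ref{thm:numseq} almost verbatim, with the triple $(f\text{-vector}, h\text{-vector}, P(n))$ replaced throughout by the interior triple $(e\text{-vector}, k\text{-vector}, P(n)^\#)$. I would begin from the recalled identity $P(n)^\# = \sum_{\alpha^i_\beta \in \mathcal{I}_P} \alpha^i(n)^\#$. Since $e_i$ counts the $i$-dimensional simplices of $\mathcal{I}_P$ by definition, this rewrites as
$$P(n)^\# = \sum_{i=0}^d e_i\, \alpha^i(n)^\#.$$
So the whole problem reduces to expressing the $e$-vector in terms of the $k$-vector and then running the same binomial manipulation already used for $P(n)$.

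The key new step is to establish the analog of the $f$-vector/$h$-vector relation, namely
$$e_i = \sum_{j=0}^{i+1} k_j \binom{d+1-j}{i+1-j}.$$
Here I cannot simply cite the standard conversion from Section 8.3 of \cite{gz:lop}, because $\mathcal{I}_P$ is not a simplicial complex; I must argue directly from the interval partition. By Lemma \ref{lem:intpart}, the intervals $[D_F,F]$ with $F\in\mathcal{F}_d(\mathcal{C}_P)$ partition $\mathcal{I}_P$, so every $i$-dimensional simplex of $\mathcal{I}_P$ lies in exactly one interval. For a fixed $F$ with $|D_F|=j$, an $i$-face in $[D_F,F]$ is a subset of the $d+1$ vertices of $F$ of size $i+1$ that contains the $j$ vertices of $D_F$; there are exactly $\binom{d+1-j}{i+1-j}$ of these. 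Summing over all top-dimensional $F$ and grouping the facets by the value $|D_F|=j$, which is precisely what $k_j$ counts, yields the displayed formula (the binomial vanishes for $j>i+1$, which fixes the upper limit).

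With this in hand, the rest is the computation from Theorem \ref{thm:numseq}. Substituting and interchanging the order of summation gives
$$P(n)^\# = \sum_{j=0}^{d+1} k_j \left[ \sum_{i=j-1}^{d} \binom{d+1-j}{i+1-j}\, \alpha^i(n)^\# \right],$$
and then the Generalized Kim's Identity \cite[Lemma 2]{j:rspn} (reproduced in the proof of Theorem \ref{thm:numseq}) collapses each bracketed inner sum to $\alpha^d(n-j)$, giving the claimed $P(n)^\# = \sum_{j=0}^{d+1} k_j\, \alpha^d(n-j)$. The one bookkeeping difference from Theorem \ref{thm:numseq} is the range of $j$: there the sum stops at $d-1$ because $h_d=h_{d+1}=0$ (Lemmas \ref{lem:eq} and \ref{lem:hd}), whereas here the vanishing components sit at the bottom of the range; by Corollary \ref{cor:hk}, $k_0=h_{d+1}=0$ and $k_1=h_d=0$, so I retain the full range $0\le j\le d+1$.

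I expect the main obstacle to be the $e$-vector/$k$-vector identity, precisely because $\mathcal{I}_P$ fails to be a complex and so the textbook partition-to-$h$-vector dictionary does not apply off the shelf; everything hinges on checking that Lemma \ref{lem:intpart} really delivers a disjoint cover by full intervals $[D_F,F]$, so that the per-interval face count $\binom{d+1-j}{i+1-j}$ may be summed cleanly. Once that count is justified, the algebraic tail (Vandermonde's convolution inside the Generalized Kim's Identity) is identical to the one already carried out for Theorem \ref{thm:numseq} and requires no new idea.
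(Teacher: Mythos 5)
Your proposal is correct and follows essentially the same route as the paper's own proof: reduce to $P(n)^\#=\sum_i e_i\,\alpha^i(n)^\#$, derive $e_i=\sum_{j} k_j\binom{d+1-j}{i+1-j}$ from the interval partition of Lemma \ref{lem:intpart}, interchange sums, and collapse via the identity established in Theorem \ref{thm:numseq}. Your explicit justification of the $e$-vector/$k$-vector relation (counting $i$-faces within each interval $[D_F,F]$, noting $\mathcal{I}_P$ is not a simplicial complex) is exactly the step the paper states more tersely, so there is nothing to add.
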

\begin{proof}
Let $F\in \mathcal{F} (\mathcal{C}_P )$. If $|D_F|= j$, then the number of $i$-dimensional faces in the interval $[D_F, F]$ will be $\binom{d+1-j}{i+1-j}$. 
Thus $e_i = \sum _{j=0}^{i+1} k_j \binom{d+1-j}{i+1-j}$. So we see that 
\begin{align*}
P(n)^\# &=\sum_{\alpha^i _\beta \in \mathcal{I}_P} \alpha ^i (n)^\# \\
&= \sum_{i=0}^{d} e_i \alpha ^i (n)^\#\\
&= \sum_{i=0}^{d} \sum _{j=0}^{i+1} k_j \binom{d+1-j}{i+1-j} \alpha^i (n) ^\# \\
&= \sum _{j=0} ^{d+1} k_j \left[ \sum_{i=j-1}^d \binom{d+1-j}{i+1-j} \alpha^i (n) ^\# \right] \\
&= \sum _{j=0} ^{d+1} k_j \alpha ^d (n-j)
\end{align*}
where the last equality is shown in the proof of Theorem \ref{thm:numseq}.
\end{proof}

\begin{proof}[Proof of Theorem \ref{Thm:Main}]
By Theorem \ref{thm:int}
$$P(n)^\# = \sum _{j=0}^{d+1} k_j \alpha^d (n-j).$$ In addition by Corollary \ref{cor:hk}, $k_j= h_{d+1-j}$. So we see that  
$$P(n)^\# = \sum _{j=0}^{d+1} h_{d+1-j} \alpha^d (n-j)=\sum _{j=0}^{d-1} h_{j} \alpha^d (n-(d+1-j)).$$
We get the last equality by re-indexing and recalling that $h_d=h_{d+1}=0$.
\end{proof}



\end{document}